\newcommand{\Exp}[1]{{\text{E}}[ \ensuremath{ #1 } ]  }
\newcommand{\Cov}[1]{{\text{Cov}}[ \ensuremath{ #1 } ]  }
\newcommand{\bl}[1]{{\mathbf #1}}
\newcommand{\bs}[1]{{\boldsymbol #1}}
\newcommand{\tr}{\text{tr}}
\newtheorem{lemma}{Lemma}
\newtheorem{theorem}{Theorem}
\newtheorem{corollary}{Corollary}
\title{
Limitations on  detecting  row covariance in the presence of 
column covariance
}
\author{Peter D. Hoff  \\
Departments of Statistics and Biostatistics \\
University of Washington}
\date{\today}
\begin{document} 

\maketitle

\begin{abstract}
Many inference techniques for  multivariate data analysis  
assume that the 
rows of the data matrix are realizations of independent and identically distributed
random vectors.
Such an assumption will be met, for example, if the rows  
of the data matrix 
are 
multivariate measurements on a set of 
independently sampled units.  
In the absence of an independent random sample, a relevant question is 
whether or not a statistical model 
that 
assumes such row exchangeability is plausible. 
One method for assessing this plausibility is 
a statistical test of row covariation.  
Maintenance of a constant type I error rate 
regardless of the column covariance  or matrix mean
can be accomplished with a test that is
invariant 
under an appropriate group of transformations. 
In the context of a class of  elliptically contoured matrix regression 
models 
(such as matrix normal models), 
I show that  
there are no non-trivial invariant tests if the number of rows is 
not sufficiently larger than the number of columns. 
Furthermore, I show that even if the number of rows 
is large, there are 
no non-trivial invariant tests
that have power to detect arbitrary row covariance in the 
presence of arbitrary column covariance. 
However, we can construct biased tests that have power to detect 
certain types of row covariance that may be encountered
in practice.  

\smallskip
\noindent \textit{Keywords.} hypothesis test, invariance, random matrix, regression, separable covariance. 
\end{abstract}

\section{Introduction}
A canonical statistical model for 
an observed data matrix $\bl Y\in \mathbb R^{n\times p}$ is 
that the rows of the matrix are i.i.d.\ realizations from 
a mean-$\bs \mu$ $p$-variate normal distribution with covariance 
$\Sigma$. 
We write this hypothesized model as 
\begin{equation*}
  \bl Y \sim N_{n\times p} (\bl 1 \bs \mu^T , \Sigma \otimes \bl I_n ),  
\end{equation*}
where $\bl 1$ is the $n$-vector of all ones and 
 ``$\otimes$'' is the Kronecker product.  
If the rows represent multivariate measurements on a simple random 
sample of $n$ units from a population, then 
the assumption of i.i.d.\ rows is a valid 
one (or nearly valid
  for a large finite population, in the case of sampling 
without replacement). 
However, in many analyses the units 
are obtained from a convenience sample rather than a random sample. 
We might then  want to entertain an alternative model 
for the data, such as
\begin{equation*}
 \bl Y \sim N_{n\times p} (\bl 1 \bs \mu^T , \Sigma \otimes  \Psi ),   
\end{equation*}
where $\Psi$ is an unknown $n\times n$ covariance matrix 
describing dependence 
and heteroscedasticity
among the rows of $\bl Y$. 
This alternative model is the so-called 
matrix normal model \citep{dawid_1981}. 
Letting $\bl y_{i}$ and $\bl y_{i'}$ be two rows of $\bl Y$, 
this model implies that 
$\Cov{\bl y_i,\bl y_{i'} }  =\psi_{i,i'} \Sigma$. 

Several parametric and nonparametric tests of row dependence  
in the presence of column dependence
were considered in  
\citet{efron_2009} for the case that $p>n$. 
The parametric tests were 
based on estimates $\hat \Psi$ of $\Psi$
in  the matrix normal model. 
Efron suggested that such tests appear to be promising, but 
suffer some deficiencies. In particular, 
the distribution of the proposed estimate $\hat \Psi$ of $\Psi$ 
depends on the unknown 
value of $\Sigma$, a phenomenon that Efron referred to as 
``leakage.'' 
Proceeding with a similar approach, 
\citet{muralidharan_2010} constructed a permutation invariant 
test 
using asymptotic approximations in the 
$p>n$ scenario. 
This
test is conservative, 
and has power that 
depends on both $\Sigma$ and $\Psi$, that is, it also experiences 
some leakage. 

These issues suggest the use of 
invariant tests which, having power that doesn't depend on
the parameters of the null model, are leakage-free.
In this article, we characterize  the invariant tests
of $H:\Psi=\bl I$ versus $K:\Psi\neq \bl I$ in 
matrix regression models that have a stochastic representation of 
the form 
\[
 \bl Y = \bl X \bl B^T + \Psi^{1/2} \bl Z \bl \Sigma^{1/2}, 
\] 
where $\bl X\in \mathbb R^{n\times q}$ is an observed regression matrix,
$( \bl B, \Sigma, \Psi)$ are unknown parameters, and 
$\bl Z$ is a random matrix. 
For notational simplicity 
the results  in this article are developed for Gaussian random matrices, 
but as will be discussed, the results  
hold 
for a more general class of 
elliptically contoured matrix distributions,  including
heavy-tailed and contaminated distributions
\citep{gupta_varga_1994}.

The results of this article are primarily negative, illustrating inherent 
limitations on our ability to detect 
arbitrary row covariance in the presence of arbitrary column covariance. 
In the next section, I show that 
if $n\leq  p+q$ then  there are no non-trivial invariant tests of $H$ versus $K$.
In Section 3
I show that 
if $n> p+q$ then there are no non-trivial unbiased invariant tests. 
The implication of these results is that, 
for these matrix regression models, there are no useful invariant tests for arbitrary 
row covariance in the presence of arbitrary column covariance. 
On the bright side, one can construct biased invariant tests that 
have power to detect certain types of row dependence  that may be 
of interest in practice. 
For example, 
in 
Section 4 I 
obtain the UMP invariant test 
in a submodel 
where the eigenvector 
structure of $\Psi$ is known. 
This result is used  in Section 5 to construct a test 
that has the ability to detect 
positive dependence among arbitrary pairs of rows. 
The use of this test is illustrated 
on several 
datasets. 
In 
Section 6 I show how the results of the other sections 
generalize to non-Gaussian models, and discuss some open questions.

\section{Invariant test statistics}

We are interested in testing $H:\Psi = \bl I$ versus $K:\Psi\neq \bl I$ in the 
matrix normal regression model 
\begin{equation}
\bl Y \sim N_{n\times p}(  \bl X \bl B^T , \Sigma\otimes \Psi)   \ , \ \ 
\bl B\in \mathbb R^{p\times q} \ , \ \ 
\Sigma \in \mathcal S^{+}_p  \ , \ \ 
\Psi \in \mathcal S^+_n , 
\label{eqn:mnm}
\end{equation} 
where $\bl X$ is a known
$n\times q$ matrix with rank $q<n$ and 
$\mathcal S_k^+$ denotes the space of $k\times k$ 
nonsingular covariance matrices. 
Let $\bl P=(\bl I- \bl X (\bl X^T\bl X)^{-1} \bl X^T)$ 
so that $\bl R \equiv \bl P \bl Y$  is the matrix of residuals 
corresponding to the  least-squares  estimate of $\bl B$. 
Then 
$\Exp{  {\bl R}  {\bl R}^T  | \bl B, \Sigma \otimes \Psi} = 
 \tr (\Sigma)\times  \bl P \Psi \bl P$, 
which suggests the use of   ${\bl R}{\bl R}^T$ to test
whether or not $\Psi=\bl I$.  
The problem with such an approach is that, as pointed out 
by  \citet{efron_2009},
the distribution of $\bl R \bl R^T$ 
will generally depend on the 
unknown value of $\Sigma$. 
If the distribution of a 
test statistic 
depends on $\Sigma$, then maintaining the level of the test 
for all $\Sigma$ without sacrificing power is difficult.

With this in mind, we would like to identify
test statistics 
whose distributions under  $H$ do not
depend on $\bl B$ or $\Sigma$. 
To do this, we first note that 
the model and testing problem are invariant under
the group $\mathcal G$  of
transformations $g$ of the form 
$g(\bl Y ) = \bl X \bl C^T +\bl Y \bl A^T$
for $\bl C \in \mathbb R^{p\times q}$ and nonsingular $\bl A\in \mathbb R^{p\times p}$: If $\bl Y \sim N_{n\times p}(  \bl X \bl B^T , \Sigma\otimes \Psi)$
then $g(\bl Y) \sim N_{n\times p}(  \bl X [ \bl A \bl B + \bl C]^T  , \bl A \Sigma \bl A^T\otimes \Psi)$, 
It follows that the group $\mathcal G$  induces a group 
$\bar{ \mathcal G}$  of transformations on the parameter space 
of the form 
$\bar g (\bl B, \Sigma\otimes \Psi) =
  (\bl A\bl B + \bl C , \bl A \Sigma \bl A^T \otimes \Psi)$. 
This group is transitive on the null parameter space, and so  
any statistic or test function $\phi$ that is invariant to $\mathcal G$,  
meaning that $\phi( g (\bl Y ) ) = \phi( \bl Y)$ 
for all  $g\in \mathcal G$, 
will have 
a distribution that 
does not depend on  $\bl B$  or $\Sigma$.  
In particular, if $\phi$ is invariant then 
$\Exp{\phi(\bl Y)| \bl B , \Sigma\otimes \bl I}$ is constant 
in $\bl B$ and $\Sigma$.

\subsection{Maximal invariant statistics}
Any invariant test function or statistic must depend on $\bl Y$ only through
a statistic that is maximal invariant, that is, an invariant
function $M$ of $\bl Y$ for which $M(\bl Y_1) = M(\bl Y)$
implies $\bl Y_1 = g (\bl Y)$ for some $g \in \mathcal G$.
Therefore, characterizing
the class of invariant tests requires that we find
a maximal invariant statistic (since all maximal invariant 
statistics are functions of each other, we only need to find one). 
One maximal invariant statistic in particular has an intuitive form: 
Let $\hat {\bl B}$ be the OLS estimator of $\bl B$, 
let $\hat \Sigma = (\bl Y - \bl X \hat{\bl B}^T)^T 
                       (\bl Y - \bl X \hat{\bl B}^T)/n$, 
and let $\hat \Sigma^{-}$ be the  inverse or 
Moore-Penrose inverse of $\hat \Sigma$, depending 
on whether or not $\hat \Sigma$ is full rank. 
As will be shown below, 
the $n\times n$ matrix given by 
$M(\bl Y) =(\bl Y - \bl X \hat{\bl B}^T) \hat\Sigma^{-} 
 (\bl Y - \bl X \hat{\bl B}^T)^T/n$
constitutes a maximal invariant 
statistic. 
This statistic can also be written as 
$M(\bl Y) \equiv \bl G(\bl R)  =\bl R (\bl R^T \bl R)^{-} \bl R^T$ where 
$\bl R \equiv \bl P\bl Y$ is the matrix of residuals from 
the OLS fit.  
This matrix-valued function $\bl G$ maps any $n\times p$ matrix
$\bl R$ of rank $r$  to an $n\times n$ idempotent matrix
that represents
the $r$-dimensional hyperplane in $\mathbb R^{n}$ that is
spanned by the columns of $\bl R$.   
The set of
$r$-dimensional hyperplanes in $\mathbb R^{n}$ is 
a Grassman manifold, and points in this Grassman manifold
can be parametrized by the set of $n\times n$ idempotent matrices
of rank $r$.
In the context of the matrix regression model,
$\bl G(\bl R)$ gives the hyperplane that contains the 
residual row variation of the data matrix $\bl Y$.

To show that $\bl R (\bl R^T \bl R)^{-} \bl R^T$ is maximal 
invariant, we begin with two lemmas:
\begin{lemma}
Let $\bl R\in \mathbb R^{n\times p}$ be a matrix with rank $r>0$, 
and let $\bl R = \bl U \bl D \bl V^T$ be the singular value decomposition (SVD) of 
$\bl R$, so that
$\bl U^T \bl U = \bl V^T \bl V = \bl I_r$, and 
$\bl D\in \mathbb R^{r\times r}$ is  a positive definite diagonal matrix. 
Then 
$\bl G(\bl R) = \bl U \bl U^T$. 
\label{lem:grass} 
\end{lemma}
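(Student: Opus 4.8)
The plan is to reduce the claim to a short computation with the SVD, after first checking that $\bl G(\bl R)=\bl R(\bl R^T\bl R)^{-}\bl R^T$ is well defined, i.e.\ that its value does not depend on the particular choice of generalized inverse $(\bl R^T\bl R)^{-}$. For the latter I would use the standard argument: if $\bl G_1$ satisfies $(\bl R^T\bl R)\bl G_1(\bl R^T\bl R)=\bl R^T\bl R$, then the columns of $\bl R\bl G_1\bl R^T\bl R-\bl R$ lie in the column space of $\bl R$ and are also annihilated by $\bl R^T$, hence lie in the orthogonal complement of that column space, so they vanish: $\bl R\bl G_1\bl R^T\bl R=\bl R$. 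Applying this identity (and its transpose) to two generalized inverses $\bl G_1,\bl G_2$ gives $\bl R\bl G_1\bl R^T=\bl R\bl G_1\bl R^T\bl R\bl G_2\bl R^T=\bl R\bl G_2\bl R^T$, so the value is the same for all generalized inverses; one could also simply cite a standard reference on generalized inverses for this.

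Having established well-definedness, I would pick the convenient generalized inverse supplied by the SVD. From $\bl R=\bl U\bl D\bl V^T$ with $\bl U^T\bl U=\bl V^T\bl V=\bl I_r$ and $\bl D$ diagonal and positive definite, we obtain $\bl R^T\bl R=\bl V\bl D^2\bl V^T$, and I would verify directly that $\bl V\bl D^{-2}\bl V^T$ is a generalized inverse of $\bl R^T\bl R$, since $(\bl V\bl D^2\bl V^T)(\bl V\bl D^{-2}\bl V^T)(\bl V\bl D^2\bl V^T)=\bl V\bl D^2\bl V^T$ by repeated use of $\bl V^T\bl V=\bl I_r$. When $r=p$ this is just the ordinary inverse, and when $r<p$ it is in fact the Moore--Penrose inverse, so this choice is consistent with the meaning of $(\cdot)^{-}$ in the text.

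Finally I would substitute and collapse:
\[
\bl G(\bl R)=\bl R(\bl R^T\bl R)^{-}\bl R^T
 =(\bl U\bl D\bl V^T)(\bl V\bl D^{-2}\bl V^T)(\bl V\bl D\bl U^T)
 =\bl U\bl D\bl D^{-2}\bl D\bl U^T=\bl U\bl U^T,
\]
where each step uses $\bl V^T\bl V=\bl I_r$ and that the diagonal factors $\bl D,\bl D^{-2}$ commute. This is the asserted identity.

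The computation itself is entirely routine; the only point that genuinely needs care is the well-definedness step, i.e.\ making sure the equality $\bl G(\bl R)=\bl U\bl U^T$ holds for whichever generalized inverse appears in the definition of $\bl G$. This is the substantive content of the lemma: $\bl U\bl U^T$ is the orthogonal projection onto the column space of $\bl R$, which is why $\bl G(\bl R)$ can be used later to encode the residual row hyperplane of the data matrix.
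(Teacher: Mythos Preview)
Your proof is correct and follows essentially the same SVD computation as the paper. The only difference is that you add a preliminary well-definedness argument for arbitrary generalized inverses, which is not strictly needed here since the paper fixes $(\cdot)^{-}$ to mean the Moore--Penrose inverse; with that convention $(\bl R^T\bl R)^{-}=\bl V\bl D^{-2}\bl V^T$ directly, and the substitution step is identical to the paper's.
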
 
\begin{proof}
\begin{align*}
\bl G( \bl R) = \bl R (\bl R^T \bl R)^{-} \bl R^T & =  
 \bl U \bl D \bl V^T ( \bl V \bl D^2 \bl V^T)^{-} \bl V \bl D \bl U^T \\
&= \bl U \bl D \bl V^T \bl V \bl D^{-2} \bl V^T \bl V \bl D \bl U^T \\
&= \bl U \bl U^T. 
\end{align*}
\end{proof}
Note that we are using a reduced form of the SVD that does not include any 
zero singular values. This is different from some 
computing environments (such as {\sf R}) that return
$n\wedge p$ left singular vectors even if 
$r< n\wedge p$. 

\begin{lemma}
If $\bl G(\bl R) = \bl G(\bl R_1)$ then there exists a nonsingular
matrix $\bl A$ such that $\bl R_1 = \bl R \bl A^T$. 
\label{lem:mil}
\end{lemma}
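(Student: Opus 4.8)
The plan is to combine Lemma~\ref{lem:grass} with one short linear-algebra fact. The key observation is that $\bl G(\bl R)=\bl R(\bl R^T\bl R)^{-}\bl R^T$ is the orthogonal projection onto the column space of $\bl R$ (by Lemma~\ref{lem:grass} it equals $\bl U\bl U^T$, with $\bl U$ supplying an orthonormal basis for that column space), and likewise $\bl G(\bl R_1)$ is the orthogonal projection onto the column space of $\bl R_1$. Hence the hypothesis $\bl G(\bl R)=\bl G(\bl R_1)$ says exactly that $\bl R$ and $\bl R_1$ have the same column space, and in particular they share a common rank $r$. If $r=0$ then $\bl R=\bl R_1=\bl 0$ and $\bl A=\bl I_p$ works, so assume $r>0$.

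Next I would use the reduced SVD $\bl R=\bl U\bl D\bl V^T$ with $\bl U\in\mathbb R^{n\times r}$. Since $\bl U\bl U^T$ is the orthogonal projection onto the common column space, it fixes $\bl R_1$, so $\bl R_1=\bl U\bl U^T\bl R_1=\bl U\bl E$ with $\bl E\equiv\bl U^T\bl R_1\in\mathbb R^{r\times p}$; moreover $\bl E$ has rank $r$ because $\bl U$ has full column rank and $\bl U\bl E=\bl R_1$ has rank $r$. Writing also $\bl F\equiv\bl D\bl V^T\in\mathbb R^{r\times p}$, which has rank $r$, we have $\bl R=\bl U\bl F$. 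It therefore suffices to produce a nonsingular $\bl A\in\mathbb R^{p\times p}$ with $\bl F\bl A^T=\bl E$, since then $\bl R\bl A^T=\bl U\bl F\bl A^T=\bl U\bl E=\bl R_1$.

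The remaining claim is the linear-algebra fact: if $\bl F^T$ and $\bl E^T$ are $p\times r$ matrices of rank $r$ (so necessarily $r\le p$), there is a nonsingular $p\times p$ matrix $\bl A$ with $\bl A\bl F^T=\bl E^T$. I would prove this by extending the $r$ linearly independent columns of $\bl F^T$ to a basis of $\mathbb R^p$, extending the columns of $\bl E^T$ to a basis of $\mathbb R^p$ as well, and letting $\bl A$ be the matrix of the linear map carrying the first basis to the second: it is nonsingular because it sends a basis to a basis, and its action on the first $r$ basis vectors gives $\bl A\bl F^T=\bl E^T$. Transposing yields $\bl F\bl A^T=\bl E$, which closes the argument.

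There is no deep obstacle, but the step to watch is the rank-deficient case $r<p$: there the equation $\bl R\bl A^T=\bl R_1$ does not determine $\bl A$ (and one cannot simply take $\bl A^T=(\bl R^T\bl R)^{-1}\bl R^T\bl R_1$, which need not even be defined or invertible), so the real content of the lemma is that among the many $\bl A$ satisfying $\bl R\bl A^T=\bl R_1$ at least one can be chosen nonsingular. The basis-completion construction above is exactly what guarantees this, and it is where I would be most careful to check that the completed matrices are genuinely full rank.
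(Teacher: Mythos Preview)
Your proposal is correct and follows essentially the same route as the paper: both use Lemma~\ref{lem:grass} to identify $\bl G(\bl R)=\bl U\bl U^T$ as the orthogonal projection onto the column space, conclude $\bl R_1=\bl U\bl U^T\bl R_1$, and then handle the rank-deficient case $r<p$ by completing a rank-$r$ factor to a nonsingular $p\times p$ matrix. The only cosmetic difference is that the paper inserts $\bl D\bl V^T\bl V\bl D^{-1}$ to write $\bl R_1=\bl R(\bl V\bl F)$ directly and then extends via $\bl V^{\perp}$, whereas you reduce to the equation $\bl F\bl A^T=\bl E$ between two $r\times p$ full-row-rank matrices and invoke a basis-completion argument; these are equivalent constructions.
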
 
\begin{proof}
Let $\bl R = \bl U \bl D \bl V^T$ be the SVD of $\bl R$, and 
let $\bl U_1$ be the matrix of left singular vectors of $\bl R_1$. 
Then $\bl U \bl U^T = \bl U_1 \bl U_1^T$  by the assumption and  Lemma \ref{lem:grass}, and so 
\begin{align*}
\bl R_1   =  \bl U_1 \bl U_1^T \bl R_1 
         &=   \bl U \bl U^T  \bl R_1  \\   
         &=   \bl U \bl D \bl V^T \bl V \bl D^{-1}  \bl U^T \bl R_1 \\ 
         &= \bl R ( \bl V \bl D^{-1} \bl U^T \bl R_1 ) 
         \equiv  \bl R  ( \bl V \bl F ) ,
\end{align*}
where $\bl F = \bl D^{-1} \bl U^T \bl R_1$.  
The rank of $\bl F$ is the same as that of $\bl R$ and $\bl R_1$, 
say $r$. If $r=p$ then $\bl A^T = \bl V \bl F$ is nonsingular and the result 
follows. If $r<p$ then let $\bl V^{\perp}\in \mathbb R^{p\times(p-r)}$ 
be an orthonormal basis 
for the null space of $\bl V$. Let $\bl A^T = [ \bl V\  \bl V^{\perp} ]  
 [   \bl F^T \ \bl G^T ]^T = \bl V \bl F + \bl V^{\perp} \bl G$, where $\bl G$ is any $(p-r)\times p$ 
matrix such that $[ \bl F^T \  \bl G^T ]^T$ is of rank $p$. 
Then $\bl A^T$ is nonsingular and $\bl R \bl A^T = \bl R_1$. 
\end{proof}   

It is now easy to derive the main result of this section, 
that $\bl R(\bl R^T\bl R)^{-} \bl R^T$ is maximal invariant:
\begin{theorem}
Let $\bl X \in \mathbb R^{n\times q}$ be of rank $q<n$ and 
let $\bl P = \bl I -\bl X(\bl X^T \bl X)^{-1} \bl X^T $.
Let $\mathcal G$ be the group of transformations  
on $\mathbb R^{n\times p}$  of the form 
$g(\bl Y) = \bl X \bl C^T + \bl Y \bl A^T$
for $\bl C \in \mathbb R^{p\times q}$ and 
nonsingular $\bl A \in \mathbb R^{p\times p}$.  
Then $M(\bl Y)\equiv \bl G(\bl R) = \bl R (\bl R^T \bl R)^{-} \bl R^T$ is maximal 
invariant, where $\bl R = \bl P\bl Y$. 
\end{theorem}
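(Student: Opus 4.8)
The plan is to verify the two defining properties of a maximal invariant for the map $M(\bl Y) = \bl G(\bl R)$, $\bl R = \bl P \bl Y$: that $M$ is $\mathcal G$-invariant, and that its level sets coincide with the orbits of $\mathcal G$. Both parts will be short assemblies of the two lemmas together with the orthogonal decomposition of $\mathbb R^n$ into the column space of $\bl X$ and its complement, so I do not expect a substantive obstacle here — the real work has already been done in Lemmas \ref{lem:grass} and \ref{lem:mil}.

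For invariance, I would first note that $\bl P \bl X = \bl 0$, so that applying $g(\bl Y) = \bl X \bl C^T + \bl Y \bl A^T$ gives $\bl P g(\bl Y) = \bl P \bl Y \bl A^T = \bl R \bl A^T$; the residual matrix is thus transformed by right multiplication by the nonsingular matrix $\bl A^T$. By Lemma \ref{lem:grass}, $\bl G(\bl R) = \bl U \bl U^T$ is the orthogonal projection onto the column space of $\bl R$. Since $\bl A$ is nonsingular, right multiplication by $\bl A^T$ is a bijection on $\mathbb R^p$, so $\bl R \bl A^T$ and $\bl R$ have the same column space, whence $\bl G(\bl R \bl A^T) = \bl G(\bl R)$ and $M(g(\bl Y)) = M(\bl Y)$.

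For maximality, suppose $M(\bl Y_1) = M(\bl Y)$, and set $\bl R_1 = \bl P \bl Y_1$, $\bl R = \bl P \bl Y$, so that $\bl G(\bl R_1) = \bl G(\bl R)$. Lemma \ref{lem:mil} then supplies a nonsingular $\bl A \in \mathbb R^{p \times p}$ with $\bl R_1 = \bl R \bl A^T$; it remains only to recover $\bl C$. I would decompose $\bl Y_1 = (\bl I - \bl P)\bl Y_1 + \bl P \bl Y_1 = \bl X (\bl X^T \bl X)^{-1} \bl X^T \bl Y_1 + \bl R_1$, substitute $\bl R_1 = \bl R \bl A^T = \bl Y \bl A^T - \bl X (\bl X^T \bl X)^{-1} \bl X^T \bl Y \bl A^T$, collect the terms lying in the column space of $\bl X$, and read off $\bl C^T = (\bl X^T \bl X)^{-1} \bl X^T (\bl Y_1 - \bl Y \bl A^T)$, a $q \times p$ matrix, so that $\bl C \in \mathbb R^{p \times q}$ as required and $\bl X \bl C^T + \bl Y \bl A^T = \bl Y_1$.

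The one point needing a word of care is the degenerate case $\bl R = \bl 0$, to which Lemma \ref{lem:mil} does not directly apply: there $M(\bl Y) = \bl 0$, the equality $M(\bl Y_1) = \bl 0$ forces $\bl R_1 = \bl 0$ as well (a zero projection has trivial range), and one takes $\bl A = \bl I_p$ in the construction above. In the regression model itself $\bl R$ has rank $\min(n - q, p) > 0$ almost surely, so this case is vacuous in practice, but it is cheap to dispatch and I would include it for completeness.
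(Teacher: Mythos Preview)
Your proposal is correct and follows essentially the same route as the paper: invoke Lemma~\ref{lem:mil} to obtain the nonsingular $\bl A$ from $\bl G(\bl R_1)=\bl G(\bl R)$, then solve for $\bl C^T=(\bl X^T\bl X)^{-1}\bl X^T(\bl Y_1-\bl Y\bl A^T)$ by rearranging the residual identity. Your write-up is slightly more explicit than the paper's on invariance (which the paper dismisses as ``straightforward'') and adds a clean treatment of the degenerate case $\bl R=\bl 0$, which the paper omits; neither difference is substantive.
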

\begin{proof}
If is straightforward to show that 
$M(\bl Y)$ is invariant. 
Recall that to show 
it is maximal invariant, we must show that  
if $M(\bl Y_1) = M(\bl Y)$, then there exists 
a $g\in \mathcal G$ such that $\bl Y_1= g(\bl Y)$, 
or equivalently, that there exist matrices 
$\bl C \in  \mathbb R^{p\times q}$ and
nonsingular $\bl A \in \mathbb R^{p\times p}$ 
such that $\bl Y_1 =  \bl X \bl C^T +  \bl Y \bl A^T$. 
To find such matrices, let 
 $\bl R = \bl P \bl Y$ and $\bl R_1 = \bl P \bl Y_1$. 
If $\bl G(\bl R_1) = \bl G(\bl R)$ then by  
Lemma \ref{lem:mil} 
we must have 
$\bl R_1 = \bl R \bl A^T$ for a nonsingular matrix $\bl A$.  
Writing $\bl R_1$ and $\bl R$ in terms of $\bl Y_1$ and $\bl Y$, we have
\begin{align*}
( \bl I - \bl X (\bl X^T\bl X)^{-1} \bl X^T) \bl Y_1  &= 
( \bl I - \bl X (\bl X^T\bl X)^{-1} \bl X^T) \bl Y \bl A^T \\
 \bl Y_1 &= \bl Y \bl A^T + 
    \bl X (\bl X^T\bl X)^{-1} \bl X^T (\bl Y_1 - \bl Y \bl A^T) \\
 &= \bl Y \bl A^T + \bl X \bl C^T, 
\end{align*}
where $\bl C^T = (\bl X^T\bl X)^{-1} \bl X^T(\bl Y_1 - \bl Y \bl A^T) $. 
\end{proof}

To summarize, 
any invariant test statistic or test function 
must depend on $\bl Y$ only through 
$\bl R(\bl R^T \bl R)^{-} \bl R^T$, or equivalently 
$\bl U \bl U^T$, where $\bl U\in \mathbb R^{n\times r}$ is the matrix of 
left singular vectors of the  
rank-$r$ residual matrix $\bl R$.  
While  $\bl U^T \bl U= \bl I_r$ regardless of $r$, we also 
have $\bl U \bl U^T = \bl I_n$ if $r=n$. In this case, 
the maximal invariant statistic is constant, as is any
other $\mathcal G$-invariant function, including any 
invariant test function or statistic. 
Of course, any test that is based on a constant test function or statistic
is practically useless, as it 
must have constant power equal to its level. 
This unfortunate case occurs 
when $n$ is too small relative to 
$p$ and $q$:
\begin{corollary}
If $n\leq p+q$ then any $\mathcal G$-invariant function of $\bl Y$ is constant, 
and as a result, 
any invariant level-$\alpha$ test $\phi$ of $H: \Psi =\bl I$ 
versus $K: \Psi \neq \bl I$ has power 
$\Exp{ \phi(\bl Y) | \bl B, \Sigma\otimes \Psi } = \alpha$
for all $\bl B$, $\Sigma$ and $\Psi$.  
\label{cor:npq} 
\end{corollary}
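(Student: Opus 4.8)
The plan is to combine the preceding theorem (that $M$ is maximal invariant) with a generic-rank argument, the latter being the only place the hypothesis $n\le p+q$ enters. First, recall that any $\mathcal G$-invariant test function $\phi$ must factor through the maximal invariant, say $\phi(\bl Y)=h(\bl G(\bl R))$ with $\bl R=\bl P\bl Y$, where $\bl G(\bl R)$ is the orthogonal projection onto the column space of $\bl R$. Since $\mathrm{rank}(\bl P)=n-q$, the columns of $\bl R$ always lie in the \emph{fixed} $(n-q)$-dimensional subspace $\mathrm{col}(\bl P)$, so $\mathrm{rank}(\bl R)\le n-q$; and whenever $\mathrm{rank}(\bl R)=n-q$ we necessarily have $\mathrm{col}(\bl R)=\mathrm{col}(\bl P)$ and hence $\bl G(\bl R)=\bl P$, a constant independent of $\bl Y$ (equal to $\bl I_n$ in the no-covariate case $q=0$).

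It therefore suffices to show that $\mathrm{rank}(\bl R)=n-q$ with probability one under model~\eqref{eqn:mnm}, for every $(\bl B,\Sigma,\Psi)$. Here I would pass to a reduced residual matrix: fix $\bl Q\in\mathbb R^{n\times(n-q)}$ with orthonormal columns spanning $\mathrm{col}(\bl P)$, so that $\bl P=\bl Q\bl Q^T$, $\bl Q^T\bl X=\bl 0$, and $\mathrm{rank}(\bl R)=\mathrm{rank}(\bl Q^T\bl Y)$. Then $\bl Q^T\bl Y\sim N_{(n-q)\times p}(\bl 0,\Sigma\otimes\bl Q^T\Psi\bl Q)$, and both $\Sigma$ and $\bl Q^T\Psi\bl Q$ are nonsingular (the latter because $\Psi\succ 0$ and $\bl Q$ has full column rank), so $\bl Q^T\bl Y$ has a Lebesgue density on $\mathbb R^{(n-q)\times p}$. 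Because $n\le p+q$, i.e.\ $n-q\le p$, the set of matrices in $\mathbb R^{(n-q)\times p}$ of rank strictly less than $n-q$ is cut out by the vanishing of all $(n-q)\times(n-q)$ minors and is thus a proper algebraic subset, of Lebesgue measure zero. Hence $\mathrm{rank}(\bl Q^T\bl Y)=n-q$ almost surely, so $\bl G(\bl R)=\bl P$ almost surely, and $\phi(\bl Y)=h(\bl P)$ almost surely — a constant, for every value of the parameters.

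Finally, $\Exp{\phi(\bl Y)\mid \bl B,\Sigma\otimes\Psi}=h(\bl P)$ for all $\bl B,\Sigma,\Psi$; evaluating under $H:\Psi=\bl I$ identifies this common rejection probability with the size of $\phi$, so for a test attaining level $\alpha$ the power is $\alpha$ at every parameter value, which is the claim.

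The routine steps — invariance of $M$, the reduction of an invariant test to a function of a maximal invariant, $\bl Q^T\bl X=\bl 0$, and the nonsingularity of $\bl Q^T\Psi\bl Q$ — I would not belabor. The substantive point is the generic full-rank statement for $\bl Q^T\bl Y$: it relies essentially on $\Sigma$ and $\Psi$ being nonsingular (to get a density) and on the rank-deficient locus being null, and it is exactly here that $n\le p+q$ is needed, since for $n-q>p$ a full-rank $\bl R$ would have rank $p<n-q$ and $\bl G(\bl R)$ would \emph{not} collapse to $\bl P$. One caveat worth stating explicitly is that the maximal invariant is constant only \emph{almost surely} rather than identically (e.g.\ $\bl Y=\bl 0$ gives $\bl R=\bl 0$ and $\bl G(\bl R)=\bl 0\ne\bl P$), but this almost-sure constancy under every parameter is precisely what the conclusion about power requires.
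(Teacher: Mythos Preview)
Your proof is correct and follows essentially the same route as the paper: your $\bl Q$ is the paper's $\bl H^T$, and both arguments reduce to showing that the $(n-q)\times p$ matrix $\bl Q^T\bl Y=\bl H\bl Y$ has full row rank almost surely when $n-q\le p$, forcing $\bl G(\bl R)=\bl P$. If anything, your treatment is slightly more careful than the paper's in spelling out the density-plus-null-set argument and in noting explicitly that the constancy is almost sure rather than identical.
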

\begin{proof}

The idempotent matrix $\bl P$ has $n-q$ eigenvectors with eigenvalues of 1,
and $q$ eigenvectors with eigenvalues of zero.
Let
$\bl H\in \mathbb R^{(n-q)\times n}$ be the matrix with rows
equal to the  first $n-q$  eigenvectors of $\bl P$,
so that $\bl H^T\bl H = \bl P$ and $\bl H \bl H^T = \bl I_{n-q}$.
Letting $\tilde{\bl Y} =  \bl H \bl Y$, we have  
    $\bl H^T \tilde{\bl Y} = \bl H^T\bl H \bl Y = \bl P \bl Y = \bl R $, 
and $\tilde {\bl Y} $ and $\bl R$ are  of the same rank 
$r =  (n-q) \wedge p$ for full rank $\bl Y$. 
The maximal invariant statistic can thus be expressed 
\begin{align*}
\bl R(\bl R^T \bl R)^{-} \bl R^T &= 
 \bl H^T \tilde{\bl Y} ( \bl Y^T \bl P \bl Y  )^{-} \tilde{\bl Y}^T \bl H \\
&= \bl H^T  \left ( 
   \tilde{\bl Y} ( \tilde{\bl Y}^T \tilde{\bl Y} )^{-}  \tilde{\bl Y}^T
   \right ) \bl H \\
 &= \bl H^T  \left (  \tilde{\bl U} \tilde{\bl U}^T 
    \right ) \bl H , 
\end{align*}
where $\tilde{\bl U}$ is the $(n-q)\times r$ matrix of left  
singular vectors of $\tilde{\bl Y}$.   
We have $\tilde{\bl U} \tilde{\bl U}^T = \bl I_{n-q}$ 
for all full rank $\bl Y$ if $r=n-q$, 
which happens if $n-q \leq p$, that is, if 
$n\leq p+q$.  In this case, the maximal invariant 
statistic $\bl G(\bl R)$ takes on the constant value 
$\bl H^T\bl H=\bl P$ for all full rank $\bl Y\in \mathbb R^{n\times p}$, 
and so any test function must be constant almost surely, 
and have power equal to its level. 
\end{proof}

This result says that there are no invariant tests of 
$H$ versus $K$ in the ``$p$ bigger than $n$'' regime. We illustrate 
with two simple examples. 
\begin{description}
\item[Mean-zero model:]
Consider testing $H$ versus $K$ in the 
mean-zero matrix normal model, given by
$\bl Y \sim N_{n\times p}(\bl 0, \Sigma\otimes \Psi)$.  
In this case, a maximal invariant statistic is 
$\bl Y (\bl Y^T \bl Y)^{-} \bl Y$. 
This is equal to $\bl I_n$ a.e.\ if  $n\leq p$, 
and so a non-trivial invariant test can only exist if $n>p$. 
\item[Column-means model:]  
Consider testing $H$ versus $K$ in the  
column means model, given by
$\bl Y \sim N_{n\times p}(\bl 1 \bs \mu^T , \Sigma\otimes \Psi)$,
where $\bs \mu\in \mathbb R^{p}$ is a vector of column-specific means. 
In this case, 
$\bl P = (\bl I - \bl 1 \bl 1^T/n)$, and $\bl R$ is obtained 
by centering the columns of $\bl Y$.  
The maximal invariant statistic is 
equal to $\bl P$ a.e.\ if  $n\leq p+1$, and so 
$n$ must be at least $p+2$ for a non-trivial invariant 
test to exist. 
\end{description}

\subsection{Reduction to the mean-zero model}
In some of what follows, it will be less notationally cumbersome 
to work with an alternative maximal invariant statistic. 
Let $\tilde {\bl Y } = \bl H \bl Y$ as in the proof of  Corollary \ref{cor:npq}. 
In that proof
we saw that 
\begin{align*} 
\bl G(\bl R) \equiv \bl R(\bl R^T \bl R)^{-} \bl R^T 
&= \bl H^T  \left ( 
   \tilde{\bl Y} ( \tilde{\bl Y}^T \tilde{\bl Y} )^{-}  \tilde{\bl Y}^T
   \right ) \bl H \\
 &\equiv  \bl H^T  \bl G(\tilde{\bl Y}) 
     \bl H. 
\end{align*}
Note also that $\bl G(\tilde{\bl Y})= \bl H \bl G(\bl R) \bl H^T$, and so 
$\bl G(\bl R)$ and $\bl G(\tilde{\bl Y})$ are functions of each other. 
Therefore,  $\bl G(\tilde{\bl Y})$ is maximal invariant as well 
(here we are abusing notation somewhat, letting $\bl G$ denote the same operation 
on matrices of different dimensions). 

The advantage of using $\bl G(\tilde{\bl Y})$ is that 
doing so 
reduces the testing problem to the mean-zero case: 
If $\bl Y\sim N_{n\times p}(\bl X\bl B^{T} ,\Sigma\otimes \Psi)$ then 
$\bl H \bl Y \equiv\tilde {\bl Y} \sim  N_{(n-q)\times p} (\bl 0, \Sigma\otimes \tilde \Psi )$, where $\tilde \Psi = \bl H\Psi \bl H^T$. 
Also note that 
 $\tilde \Psi$ ranges over $\mathcal S_{n-q}^+$ 
as $\Psi$ ranges over $\mathcal S_{n}^+$, 
and 
that $\tilde \Psi\neq \bl I_{n-q}$ implies
$\Psi \neq \bl I_n$ (but not vice versa). 
The testing problem 
of  $H:\tilde \Psi =\bl I_{n-q}$ versus $K: \tilde \Psi \neq \bl I_{n-q}$ in the 
mean-zero model for $\tilde {\bl Y}$ is invariant under 
the group $\mathcal G_L$ of linear transformations
of the form  $g(\tilde{\bl Y}) = \tilde{\bl Y} \bl A^T$  for nonsingular 
$\bl A$, and the statistic 
$\bl G(\tilde {\bl Y})= \tilde {\bl Y}( \tilde {\bl Y}^T \tilde {\bl Y})^{-} \tilde {\bl Y}^T$ is maximal invariant. 
Therefore, every $\mathcal G$-invariant 
level-$\alpha$ test under model (\ref{eqn:mnm}) 
is equivalent to a $\mathcal G_L$-invariant 
level-$\alpha$
test under the mean 
zero model, and vice-versa. 
This equivalence can be helpful in identifying 
limitations of $\mathcal G$-invariant tests. For example, 
consider the column means model where 
$\bl Y\sim N_{n\times p}(\bl 1 \bs \mu^T , \Sigma\otimes \Psi)$. 
An invariant test of $H:\Psi=\bl I_n$ versus $K:\Psi\neq \bl I_n$ 
is equivalent to a test of $H:\tilde \Psi=\bl I_{n-1}$ versus $K:\tilde \Psi\neq \bl I_{n-1}$ in the mean-zero model. This implies that an exchangeable 
 row covariance 
 $\Psi = \bl I + \omega \bl 1 \bl 1^T$ is not detectable 
by a $\mathcal G$-invariant test, as  
 $\tilde \Psi = \bl H (  \bl I + \omega  \bl 1 \bl 1^T) \bl H^T = \bl I_{n-1}$. 
This limitation makes intuitive sense, as exchangeable row covariance is 
manifested by adding a common random normal $p$-vector to each row of 
the data matrix, the effect of which is confounded with that of the mean vector
$\bs \mu$. 

\subsection{Reduction of row effects models}
Many datasets exhibit across-row heterogeneity that 
we may wish to represent with a mean model for $\bl Y$.  For example, 
the possibility that some rows and some columns give consistently higher 
 or consistently lower responses than average 
can be represented with a 
row and column effects model 
$\bl Y \sim N_{n\times p}(  \bs \alpha \bl 1^T_p + \bl 1_n \bs \beta^T , 
 \Sigma\otimes \Psi)$,  where $\bs \alpha\in \mathbb R^n$ and $\bs \beta\in \mathbb R^{p}$ 
are unknown parameters.  
This model is a special case of a row and column regression model, 
\begin{equation}
\bl Y \sim N_{n\times p}(  \bl A \bl W^T + \bl X \bl B^T, 
   \Sigma\otimes \Psi)
\label{eqn:rce}, 
\end{equation}
where $\bl W \in \mathbb R^{p\times q_1}$ and 
$\bl X \in \mathbb R^{n\times q_2}$ are observed matrices of column 
and row regressors. 

This model 
is not invariant to any group of transformations that includes 
multiplication on the right by arbitrary non-singular $p\times p$ matrices, 
as such transformations result in a different mean model 
(a bilinear regression model). 
However, using the ideas of the previous subsection we can construct 
test statistics having distributions 
that do not depend on the parameters  $\bl A$, $\bl B$ and $\Sigma$ of the null model.
Let $\bl P_W =\bl I - \bl W(\bl W^T\bl W)^{-1} \bl W^T$, 
and let $\bl H_W\in \mathbb R^{(p-q_1)\times p}$ 
be such that 
$\bl H_W^T\bl H_W = \bl P_W$ and
 $\bl H_W \bl H_W^T = \bl I_{p-q_1}$. 
Then 
$\bl Y \bl H_W^T \sim N_{n\times (p-q_1) } ( \bl X \tilde{\bl B}^T , 
  \tilde \Sigma  \otimes \Psi)$, 
where $\tilde{\bl B} = \bl H_W \bl B$ and 
$\tilde{\Sigma}= \bl H_W \Sigma  \bl H_W^T$. 
As $\bl B$ and $\Sigma$ range over $\mathbb R^{p\times q_2 }$ and 
$\mathcal S_{p}^+$, 
$\tilde{\bl B} $ and $\tilde \Sigma$ range over 
$\mathbb R^{(p-q_1) \times q_2}$ and 
$\mathcal S_{p-q_1}^+$ respectively. 
In this way, we can reduce the model (\ref{eqn:rce}) to 
the model (\ref{eqn:mnm}) considered in previously. 
Defining $\bl P_X$ and $\bl H_X$ analogously to $\bl P_W$ and $\bl H_W$, 
we can define
$\bl R = \bl P_X \bl Y \bl P_W^T$ 
and use $\bl G(\bl R)$ to construct a test statistic 
whose distribution does not depend on the parameters in the null model. 
Also note that $\bl R$ can be expressed as
$\bl R =  \bl H_X^T  \bl H_X \bl Y \bl H_W^T \bl H_W \equiv
      \bl H_X^T \tilde{\bl Y} \bl H_W $, where
$\tilde{\bl Y } \sim N_{(n-q_2)\times(p-q_1) } (\bl 0 , \tilde\Sigma \otimes 
   \tilde \Psi)$.
Furthermore, we have
\begin{align*}
\bl G(\bl R)  =  \bl R \left (\bl R^T\bl R\right )^{-} \bl R^T &=  \bl H_X^T \tilde{\bl Y} \bl H_W   
   \left (  \bl H_W^T \tilde{\bl Y}^T \tilde{\bl Y}  \bl H_W \right )^{-} 
   \bl H_W^T \tilde{\bl Y}^T \bl H_X \\
&=\bl H_X^T \tilde{\bl Y}\bl H_W \bl H_W^T
\left (\tilde{\bl Y}^T \tilde{\bl Y}\right)^{-} \bl H_W \bl H_W^T  \tilde{\bl Y}^T \bl H_X \\ 
&= \bl H_X^T  \tilde{\bl Y} \left ( \tilde{\bl Y}^T \tilde{\bl Y}\right)^{-} 
   \tilde{\bl Y}^T \bl H_X = 
  \bl H_X^T  \bl G(\tilde{\bl Y}) \bl H_X, 
\end{align*}
and so $\bl G(\bl R)$ and $\bl G(\tilde{\bl Y})$ are functions
of each other.

The row and column regression model can therefore be reduced  to a  mean-zero
model, which is invariant under $\mathcal G_L$. Any $\mathcal G_L$-invariant
test of
$H:\Psi=\bl I_n$ versus $K:\Psi\neq \bl I_n$ based on the
residual matrix $\bl R$
corresponds to a $\mathcal G_L$-invariant test of $H:\tilde\Psi =\bl I_{n-q_2}$ versus
$K:\tilde\Psi\neq \bl I_{n-q_2}$ in the mean-zero model for
$\tilde{\bl Y}$, and vice versa.

\section{Invariant tests and bias}

Can an invariant test have non-trivial power for all values of 
$\Psi$? 
For notational simplicity we first 
answer this question for the mean-zero model
$\bl Y \sim N_{n\times p}(\bl 0 , \Sigma \otimes \Psi)$,
and then extend the result to the
matrix normal regression model (\ref{eqn:mnm}). 
As described 
above, 
the mean-zero  model is invariant under the group $\mathcal G_L$ 
of nonsingular linear transformations
$g( \bl Y) = \bl Y \bl A^T$, and this 
group is transitive on the null parameter space.  
We consider only the case that $n>p$, otherwise by 
Corollary \ref{cor:npq} the maximal invariant is constant 
and there are no non-trivial invariant tests. 
In this case of $n>p$, 
a maximal invariant statistic 
is $\bl G(\bl Y) = 
  \bl Y(\bl Y^T\bl Y)^{-1} \bl Y^T = \bl U \bl U^T$, where 
$ \bl U \bl D \bl V^T$ is the SVD of $\bl Y$, 
or  alternatively $\bl U = \bl Y (\bl Y^T\bl Y)^{-1/2}$. 
Note that although these values of $\bl U$ are in general different, 
they give the same value of $\bl U\bl U^T$. 

\subsection{Unbiased tests have trivial power}
The main result of this section is  negative:
There are no non-trivial unbiased invariant tests 
of $H:\Psi=\bl I$ versus $K:\Psi\neq \bl I$. Put another way, 
if $\phi$ is a function of $\bl U\bl U^T$ under the mean-zero 
matrix normal model, then 
it cannot satisfy $\Exp{ \phi  | \Sigma \otimes \Psi}
> \Exp{\phi | \Sigma\otimes \bl I}$ for all values of 
$\Psi\neq \bl I$. More specifically, we will prove the following result:
\begin{theorem}
Let $\phi:\bl Y \rightarrow [ 0,1 ]$ be a $\mathcal G_L$-invariant function
such that $\Exp{ \phi | \bl I \otimes  \bl I } =\alpha$.
If $\Exp{ \phi |  \Sigma \otimes  \bl E \Lambda \bl E^T  } \geq \alpha$
for a fixed positive definite diagonal matrix $\Lambda$ and
all $\bl E \in \mathcal O_{n}$, then
 $\Exp{ \phi |  \Sigma \otimes  \bl E \Lambda \bl E^T  } = \alpha$
for  
 all $\bl E\in \mathcal O_{n}$.
\label{prop:mainresult}
\end{theorem}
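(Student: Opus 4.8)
The plan is to integrate the power function $\beta(\Psi)\equiv\Exp{\phi\mid\Sigma\otimes\Psi}$ over the orbit $\{\bl E\Lambda\bl E^T:\bl E\in\mathcal O_n\}$ against Haar measure and show that the integral equals $\alpha$. Since the hypothesis makes the integrand minus $\alpha$ a nonnegative function with zero integral against a probability measure, it must then vanish Haar-almost everywhere, and a continuity argument upgrades this to ``everywhere,'' which is the assertion.

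Two preliminaries. First, because $\phi$ is $\mathcal G_L$-invariant and $n>p$, we have $\phi(\bl Y)=h(\bl G(\bl Y))$ for some measurable $h$, where $\bl G(\bl Y)=\bl Y(\bl Y^T\bl Y)^{-1}\bl Y^T=\bl U\bl U^T$ is the projection onto the column span of $\bl Y$ (invariant functions factor through the maximal invariant). Second, $\beta$ is continuous on $\mathcal S_n^+$: the matrix-normal densities vary continuously in total variation as $\Psi$ varies, and $0\le\phi\le1$, so $|\beta(\Psi_1)-\beta(\Psi_2)|\le\|f_{\Psi_1}-f_{\Psi_2}\|_{\mathrm{TV}}$; in particular $\bl E\mapsto\beta(\bl E\Lambda\bl E^T)$ is continuous on $\mathcal O_n$.

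Now the averaging step. Taking $\bl E\Lambda^{1/2}$ as a square root of $\bl E\Lambda\bl E^T$, a draw $\bl Y\sim N_{n\times p}(\bl 0,\Sigma\otimes\bl E\Lambda\bl E^T)$ satisfies $\bl Y\stackrel{d}{=}\bl E\bl W$ with $\bl W\sim N_{n\times p}(\bl 0,\Sigma\otimes\Lambda)$, so $\beta(\bl E\Lambda\bl E^T)=\Exp{\phi(\bl E\bl W)}$. Integrating over the Haar probability measure $d\bl E$ on $\mathcal O_n$ and applying Fubini,
\[
\int_{\mathcal O_n}\beta(\bl E\Lambda\bl E^T)\,d\bl E=\Exp{ \int_{\mathcal O_n}\phi(\bl E\bl W)\,d\bl E }.
\]
For $\bl W$ of full rank $p$ (which holds almost surely, since $n>p$) and every $\bl E\in\mathcal O_n$, the matrix $\bl E\bl W$ has full rank and $\bl G(\bl E\bl W)=\bl E\bl G(\bl W)\bl E^T$, so $\phi(\bl E\bl W)=h(\bl E\bl G(\bl W)\bl E^T)$. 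As $\bl E$ runs over $\mathcal O_n$ under Haar measure, the conjugate $\bl E\bl G(\bl W)\bl E^T$ of the fixed rank-$p$ idempotent $\bl G(\bl W)$ has the unique $\mathcal O_n$-conjugation-invariant probability law $\nu$ on the set of rank-$p$ idempotents (the uniform distribution on the Grassmann manifold of $p$-planes in $\mathbb R^n$), whatever $\bl W$ is; hence the inner integral equals the constant $\int h\,d\nu$, and $\int_{\mathcal O_n}\beta(\bl E\Lambda\bl E^T)\,d\bl E=\int h\,d\nu$.

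It remains to identify $\int h\,d\nu$ with $\alpha$ and conclude. Under $\bl Y\sim N_{n\times p}(\bl 0,\bl I\otimes\bl I)$ the entries of $\bl Y$ are i.i.d.\ standard normal, so $\bl O\bl Y\stackrel{d}{=}\bl Y$ for every $\bl O\in\mathcal O_n$, and therefore $\bl G(\bl Y)$ has a conjugation-invariant law, namely $\nu$; thus $\int h\,d\nu=\Exp{h(\bl G(\bl Y))\mid\bl I\otimes\bl I}=\Exp{\phi\mid\bl I\otimes\bl I}=\alpha$, giving $\int_{\mathcal O_n}\beta(\bl E\Lambda\bl E^T)\,d\bl E=\alpha$. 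Since $\beta(\bl E\Lambda\bl E^T)-\alpha\ge0$ for all $\bl E$ by hypothesis and its Haar integral is zero, the continuous function $\bl E\mapsto\beta(\bl E\Lambda\bl E^T)$ equals $\alpha$ off a Haar-null set; as the set where it exceeds $\alpha$ is open and an open Haar-null subset of $\mathcal O_n$ is empty, $\beta(\bl E\Lambda\bl E^T)=\alpha$ for every $\bl E$. The heart of the argument, and the step I expect to be the main obstacle to get right, is the identification in the third paragraph: that Haar-averaging the power function over the orbit of $\Lambda$ exactly regenerates the null law of the maximal invariant (the uniform measure on the Grassmannian), which is what collapses the pointwise inequality into an equality. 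The continuity in the second preliminary, needed to pass from ``almost everywhere'' to ``everywhere,'' is routine but must be stated, and the remainder is bookkeeping with the equivariance $\bl G(\bl E\bl W)=\bl E\bl G(\bl W)\bl E^T$.
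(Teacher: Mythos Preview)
Your proof is correct and follows essentially the same approach as the paper: both hinge on the observation that Haar-averaging the power function over $\bl E\in\mathcal O_n$ reproduces the null distribution of the maximal invariant (the uniform measure on the Grassmannian), so the averaged power equals $\alpha$, and then continuity converts ``almost everywhere'' into ``everywhere.'' The paper reaches this via the representation $\bl U\bl U^T\stackrel{d}{=}\bl E\bl F\bl F^T\bl E^T$ with $\bl F$ depending only on $\Lambda$ and then cites \citet{chikuse_2003a}, while you use the equivariance $\bl G(\bl E\bl W)=\bl E\,\bl G(\bl W)\,\bl E^T$ and the uniqueness of the $\mathcal O_n$-invariant measure on rank-$p$ projections directly; these are the same argument in slightly different clothing.
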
 
\begin{proof}
If $\phi$ is $\mathcal G_L$-invariant it must be a function of the 
maximal invariant statistic 
$\bl U \bl U^T$. 
First consider the distribution of $\bl U\bl U^T$ when the covariance of 
$\bl Y$ is 
$ \Sigma \otimes \Psi$. Let $\bl E \Lambda \bl E^T$ be the eigendecomposition
of $\Psi$, let $\bl Z$ be an $n\times p$ random  matrix  with standard 
normal entries, and let
$\Sigma^{1/2}$ be the symmetric square root of $\Sigma$. Then $\bl Y \stackrel{d}{=} \bl E \Lambda^{1/2} \bl Z \Sigma^{1/2}$ and 
\begin{align*}
\bl U\bl U^T & = \bl Y (\bl Y^T \bl Y)^{-1} \bl Y^T \\ 
& \stackrel{d}{=}  
\bl E \left ( \Lambda^{1/2} \bl Z \Sigma^{1/2}( \Sigma^{1/2} \bl Z^T \Lambda^{1/2} \bl E^T \bl E \Lambda^{1/2} \bl Z\Sigma^{1/2})^{-1} \Sigma^{1/2} \bl Z^T \Lambda^{1/2}  \right )  \bl E^T   \\
&= 
\bl E \left ( \Lambda^{1/2} \bl Z (  \bl Z^T \Lambda \bl Z)^{-1}  \bl Z^T \Lambda^{1/2}  \right )  \bl E^T. 
\end{align*} 
Now let $\bl W = \bl Z (\bl Z^T \bl Z)^{-1/2}$, and note that $\bl W$ is uniformly 
distributed on the Stiefel manifold $\mathcal V_{p,n}$  
\citep[section 8.2]{gupta_nagar_2000}. 
A few additional 
steps show that
\begin{equation}
 \bl U\bl U^T \stackrel{d}{=} 
\bl E \left ( \Lambda^{1/2} \bl W ( \bl W^T \Lambda \bl W)^{-1} \bl W^T \Lambda^{1/2}  \right ) \bl E^T. 
\label{eqn:dmi}
\end{equation}
The term in  parentheses
is a random  $n\times n $ idempotent matrix 
and can be written as $\bl F\bl F^T$, where $\bl F$ is a random 
element of $\mathcal V_{p,n}$ with a distribution that depends 
on $\Lambda$ but not $\bl E$. 
Therefore, the maximal invariant statistic satisfies
$\bl U \bl U^T \stackrel{d}{=} \bl E \bl F \bl F^T \bl E^T $
where $\bl E$ is fixed and $\bl F$ is random but does not 
depend on $\bl E$. 

We now use this fact to show that, for any given $\Lambda$, 
no invariant level-$\alpha$ test can have non-trivial power for  all 
values of $\bl E$. In other words, if $\phi$ is a level-$\alpha$ 
invariant test then 
\[ 
\Exp{ \phi(\bl Y) | \Sigma\otimes \bl E \Lambda \bl E^T } \geq \alpha \ \forall \ \bl E \in \mathcal O_n  \ \ \text{implies}  \ \ 
\Exp{ \phi(\bl Y) |\Sigma  \otimes \bl E \Lambda \bl E^T } = \alpha \ \forall \  \bl E \in \mathcal O_n. 
\]
To see this, note that  
under the null hypothesis we have $\Lambda= \bl E = \bl I$
 and so  from (\ref{eqn:dmi}) we have 
$\bl U \bl U^T \stackrel{d}{=} \bl W \bl W^T$, 
where  $\bl W$ is uniformly
distributed on $\mathcal V_{p,n}$. 
Therefore, an invariant level-$\alpha$ test will be of the form 
$\phi(\bl Y) = f( \bl U \bl U^T)$, where 
$f$ satisfies 
$\Exp{ f(\bl W\bl W^T) } = \alpha$.

Now consider $\Lambda\neq \bl I$
and a uniform ``prior'' distribution 
for $\bl E$.
In this case 
the distribution of $\bl U\bl U^T$, conditional on $\Lambda$, is given 
by 
$\bl U \bl U^T   \stackrel{d}{=} 
 \bl E \bl F \bl F^T \bl E^T$
with $\bl E\sim$ uniform($\mathcal O_n$), $\bl F\in \mathcal V_{p,n}$ having 
the distribution depending on $\Lambda$ described above, 
and $\bl E$ and $\bl F$ being independent. 
By results of \citet[chap. 2]{chikuse_2003a}, 
the uniformity of $\bl E$ and the independence of $\bl E$ and $\bl F$ imply 
that 
\[ \bl U \bl U^T \stackrel{d}{=}  \bl W \bl W^T, \]
as is the case under the null distribution. 
In other words,
\[  
\int \Exp{ f(\bl U\bl U^T)| \Sigma \otimes \bl E \Lambda \bl E^T) }  
   \ \mu(d \, \bl E)  = 
  \Exp{ f(\bl W \bl W^T) } = \alpha, 
\]
where $\mu$ is the uniform probability measure over
$\mathcal O_n$. 
This implies that 
if the power $\Exp{ f(\bl U\bl U^T) | \Sigma \otimes \bl E \Lambda \bl E^T }$
is greater than $\alpha$ on a set of $\bl E$-values with $\mu$-measure greater than zero, 
it must be less than $\alpha$ on a set with non-zero measure as well. 
Equivalently, if 
$\Exp{ \phi(\bl Y) | \Sigma\otimes \bl E \Lambda \bl E^T } \geq \alpha$
for $\bl E$ almost everywhere $\mu$, then 
$\Exp{ \phi(\bl Y) | \Sigma\otimes \bl E \Lambda \bl E^T }  = \alpha$
for $\bl E$ almost everywhere $\mu$. 
Finally, continuity of the  power function implies that  
these relations that hold almost everywhere also hold 
everywhere on $\mathcal V_{p,n}$. 
\end{proof}

\subsection{Likelihood ratio tests}
One type of invariant test is a likelihood ratio test. By the above result, 
such a test must either be biased or have power equal to its level. 
Here we show that it is the latter. 
Negative two times the mean-zero matrix normal log likelihood is 
\[ 
- 2\log p(\bl Y| \Sigma\otimes \Psi)  = 
  p \log |\Psi | + n \log |\Sigma | + \tr(\Sigma^{-1} \bl Y \Psi^{-1} \bl Y^T)
 + c,
\]
where $c$ doesn't depend on $\bl Y$, $\Sigma$ or $\Psi$. 
For every positive definite $\Psi$, 
this is minimized  
in  $\Sigma$ by $\hat \Sigma = \bl Y^T \Psi^{-1} \bl Y/n$, 
giving
\begin{align} 
-2 \log p(\bl Y | \hat \Sigma \otimes \Psi) & =
  p \log |\Psi | + n \log |\bl Y^T \Psi^{-1} \bl Y/n| + np  + c  \nonumber \\
&  =   p \log |\Psi | + 
  n \log |\bl U^T \Psi^{-1} \bl U| + n\log|\bl D^2|   +  n(p-\log n )  + c 
\label{eqn:plik}
\end{align}
where  now $\bl U = \bl Y (\bl Y^T \bl Y)^{-1/2}$. 
Having a similar form are the densities for $\bl U$ and $\bl G=\bl U \bl U^T$
with respect  to the uniform probability measures
on the Stiefel and Grassman manifolds, respectively.
These densities, 
derived by \citet{chikuse_2003a}, give the following log-likelihoods:
\begin{align}
-2\log p_U(\bl U| \Psi) & =  p\log|\Psi| + n \log | \bl U^T \Psi^{-1} \bl U|
\label{eqn:ldu}  \\
-2\log p_G(\bl G|\Psi ) &=   p \log|\Psi| + n | \bl I - (\bl I-\Psi^{-1})\bl G|
 \label{eqn:ldg}, 
\end{align}
Some matrix manipulation shows that 
(\ref{eqn:ldg}) can be expressed as $-2 \log p_U( \tilde {\bl U} |\Psi)$ 
for any $\tilde{\bl U}$ such that $\tilde{\bl  U} \tilde {\bl U}^T =  
  \bl U\bl U^T =\bl G$. 

All three of these likelihoods depend on $\Psi$ only through 
$p\log|\Psi| + n \log | \bl U^T \Psi^{-1} \bl U|$. 
This term is unbounded below in $\Psi$, which can be seen as follows:
Parametrize $\Psi$ in terms of its eigendecomposition 
$\bl E \Lambda \bl E^T$, and let $\bl E = [ \bl U \ \bl U^{\bot } ]$, 
where $\bl U^{\bot }$ is the orthogonal complement of $\bl U$. 
Then $p\log|\Psi| + n \log | \bl U^T \Psi^{-1} \bl U| 
 =  - n \sum_{j=1}^p \lambda_ j + p \sum_{j=1}^n \log \lambda_j$, 
which approaches $-\infty$ as any of $\lambda_{p+1},\ldots, \lambda_n$ 
approach zero. Alternatively, 
\begin{align*}
p\log|\Psi| + n \log | \bl U^T \Psi^{-1} \bl U| &= 
   - n \sum_{j=1}^p \lambda_ j + p \sum_{j=1}^n \log \lambda_j \\
 &= - \left [(n-p) \sum_{j=1}^p \log \lambda_j  - p\sum_{j=p+1}^n \log \lambda_j \right ] \\
 & \leq  - \left [ (n-p)\log \lambda_1 + \sum_{j=2}^p \log \lambda_p  - p\sum_{j=p+1}^n \log \lambda_p \right  ] \\
&= -(n-p) \log (\lambda_1/\lambda_p), 
\end{align*}  
and so the likelihood is also unbounded in any submodel for $\Psi$ in 
which the
first eigenvalue may be made arbitrarily larger than the $p$th 
eigenvalue. 
As a result, 
all three 
likelihoods are unbounded 
in $\Psi$, and so the likelihood ratio statistic 
is constant (infinity). Therefore, 
the only way that a 
likelihood ratio test can have  level $\alpha\in (0,1)$ is if it is 
equal to the randomized test $\phi(\bl Y)=\alpha$. 

\subsection{Matrix normal regression model}
Finally, we 
apply the result in Theorem \ref{prop:mainresult} 
to the problem of testing for row dependence 
in the matrix normal regression  model (\ref{eqn:mnm}):
\begin{corollary} 
Let  $\phi$ be a 
level-$\alpha$
$\mathcal G$-invariant test of 
$H:\Psi=\bl I$ versus $K:\Psi\neq \bl I$ in the 
 model $\bl Y \sim N_{n\times p}(\bl X\bl B^T, 
\Sigma\otimes\Psi)$. 
If $\Exp{ \phi(\bl Y) |\bl B,  \Sigma\otimes \Psi }\geq \alpha$ 
for all $\Psi \in \mathcal S_{n}^+$ then 
 $\Exp{ \phi(\bl Y) |\bl B,  \Sigma\otimes \Psi }= \alpha$ 
 for all $\Psi \in \mathcal S_{n}^+$. 
\end{corollary}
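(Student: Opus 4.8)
The plan is to push the problem through the reduction to the mean-zero model developed in Section~2.2 and then quote Theorem~\ref{prop:mainresult} once for each possible set of eigenvalues of the row covariance. First I would dispose of the degenerate regime: if $n\leq p+q$ then Corollary~\ref{cor:npq} already shows that every $\mathcal G$-invariant $\phi$ is almost surely constant, so $\Exp{\phi(\bl Y)| \bl B,\Sigma\otimes\Psi}=\alpha$ for all $\Psi$ and the claim is immediate. So assume $n>p+q$, which in particular gives $n-q>p$.

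Being $\mathcal G$-invariant, $\phi$ factors through the maximal invariant, $\phi(\bl Y)=f(\bl G(\bl R))$ with $\bl R=\bl P\bl Y$ and $f$ taking values in $[0,1]$. Setting $\tilde{\bl Y}=\bl H\bl Y$ as in the proof of Corollary~\ref{cor:npq}, the identity $\bl G(\bl R)=\bl H^T\bl G(\tilde{\bl Y})\bl H$ shows that $\phi(\bl Y)=\tilde\phi(\tilde{\bl Y})$, where $\tilde\phi(\cdot)=f(\bl H^T\bl G(\cdot)\bl H)$ is a $[0,1]$-valued $\mathcal G_L$-invariant function on $\mathbb R^{(n-q)\times p}$. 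Since $\tilde{\bl Y}\sim N_{(n-q)\times p}(\bl 0,\Sigma\otimes\tilde\Psi)$ with $\tilde\Psi=\bl H\Psi\bl H^T$, we obtain the power identity
\[
\Exp{\phi(\bl Y)| \bl B,\Sigma\otimes\Psi}=\Exp{\tilde\phi(\tilde{\bl Y})| \Sigma\otimes\tilde\Psi}.
\]
Taking $\Psi=\bl I_n$ gives $\tilde\Psi=\bl H\bl H^T=\bl I_{n-q}$, so the hypothesis that $\phi$ is level $\alpha$ (together with the fact that an invariant test has constant null power equal to its size) yields $\Exp{\tilde\phi| \bl I_{n-q}\otimes\bl I_{n-q}}=\alpha$.

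Next I would use the surjectivity recorded in Section~2.2: as $\Psi$ ranges over $\mathcal S_n^+$, $\tilde\Psi=\bl H\Psi\bl H^T$ ranges over all of $\mathcal S_{n-q}^+$. Hence the assumption $\Exp{\phi(\bl Y)| \bl B,\Sigma\otimes\Psi}\geq\alpha$ for all $\Psi\in\mathcal S_n^+$ becomes $\Exp{\tilde\phi(\tilde{\bl Y})| \Sigma\otimes\tilde\Psi}\geq\alpha$ for all $\tilde\Psi\in\mathcal S_{n-q}^+$. Fix an arbitrary positive definite diagonal $\Lambda\in\mathbb R^{(n-q)\times(n-q)}$; then $\bl E\Lambda\bl E^T\in\mathcal S_{n-q}^+$ for every $\bl E\in\mathcal O_{n-q}$, so the hypotheses of Theorem~\ref{prop:mainresult} (applied in dimension $n-q$) hold, and the theorem delivers $\Exp{\tilde\phi(\tilde{\bl Y})| \Sigma\otimes\bl E\Lambda\bl E^T}=\alpha$ for all $\bl E\in\mathcal O_{n-q}$. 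Every $\tilde\Psi\in\mathcal S_{n-q}^+$ has an eigendecomposition of this form, and $\Lambda$ was arbitrary, so $\Exp{\tilde\phi(\tilde{\bl Y})| \Sigma\otimes\tilde\Psi}=\alpha$ for all $\tilde\Psi\in\mathcal S_{n-q}^+$. Reading the power identity backwards gives $\Exp{\phi(\bl Y)| \bl B,\Sigma\otimes\Psi}=\alpha$ for all $\Psi\in\mathcal S_n^+$, as claimed.

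The only point requiring care is the bookkeeping of the reduction itself: one must verify that $\phi$, as a function of $\bl Y$, genuinely agrees with a $[0,1]$-valued $\mathcal G_L$-invariant function of $\tilde{\bl Y}$ whose expectation depends on the parameters only through $\Sigma\otimes\tilde\Psi$, and that $\Psi\mapsto\bl H\Psi\bl H^T$ maps onto $\mathcal S_{n-q}^+$. Both facts are already established in Section~2.2 and in the proof of Corollary~\ref{cor:npq}, so no new continuity or limiting argument is needed beyond the one already internal to Theorem~\ref{prop:mainresult}: each $\tilde\Psi$ is reached exactly by some fixed $\Lambda$ together with some $\bl E$.
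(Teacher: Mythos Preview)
Your proposal is correct and follows essentially the same route as the paper's own proof: reduce to the mean-zero model via $\tilde{\bl Y}=\bl H\bl Y$, identify $\tilde\phi$ as a $\mathcal G_L$-invariant level-$\alpha$ test with power depending only on $\tilde\Psi=\bl H\Psi\bl H^T$, invoke surjectivity of $\Psi\mapsto\tilde\Psi$ onto $\mathcal S_{n-q}^+$, and then apply Theorem~\ref{prop:mainresult}. Your treatment is in fact a bit more careful than the paper's, since you explicitly dispose of the degenerate regime $n\leq p+q$ and spell out that Theorem~\ref{prop:mainresult} is applied one eigenvalue configuration $\Lambda$ at a time before sweeping over all of $\mathcal S_{n-q}^+$.
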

\begin{proof} 
Recall from Section 2 that 
such a test function 
can be expressed as $\phi(\bl Y) =  f( \bl H \bl Y)$ for 
$\bl H \in \mathbb R^{(n-q)\times n}$ satisfying 
$\bl H^T \bl H= \bl I - \bl X (\bl X^T\bl X)^{-1} \bl X^T$. 
Now let $\tilde \phi(\tilde{\bl Y}) = f( \tilde {\bl Y})$ for 
$\tilde{\bl Y} \in \mathbb R^{(n-q)\times p}$.  
Then 
\begin{align*}
\Exp{ \phi(\bl Y) |  \Sigma\otimes\Psi) }
    &=           \Exp{ f(\bl H \bl Y) |  \Sigma\otimes \Psi) } \\
    &= \Exp{ \tilde \phi(\tilde {\bl Y}) | \Sigma \otimes \tilde \Psi) }
\end{align*} 
where $\tilde {\bl Y} \sim N_{(n-q)\times p}(\bl 0, 
\Sigma\otimes \tilde  \Psi)$, with 
$\tilde \Psi = \bl H \Psi \bl H^T$. 
Plugging in $\Psi=\bl I_{n}$ shows that  
$\tilde \phi$ is a level-$\alpha$ 
$\mathcal G_L$-invariant test 
of $\tilde H: \tilde   \Psi = \bl I_{n-q}$ versus 
 $\tilde K: \tilde   \Psi \neq  \bl I_{n-q}$
 for 
the model 
$\tilde {\bl Y} \sim N_{(n-q)\times p}(\bl 0, 
\Sigma\otimes \tilde  \Psi)$. 
The conditions of the corollary imply that 
$\Exp{ \tilde \phi(\tilde {\bl Y} |
 \Sigma \otimes \tilde \Psi} \geq \alpha$ for 
all $\tilde \Psi \in \mathcal S_{n-q}^+$, and so Theorem
\ref{prop:mainresult}
implies that 
$\Exp{ \tilde \phi(\tilde{\bl Y}) | \Sigma \otimes \tilde \Psi} = \alpha$  
for 
all $\tilde \Psi \in \mathcal S_{n-q}^+$. 
Since the power of $\phi$ under any $\Psi$ is equal to the power 
of $\tilde \phi$ under some $\tilde \Psi$, 
we have that 
$\Exp{ \phi(\bl Y) |  \Sigma\otimes\Psi) }=\alpha$ for all  
$\Psi \in \mathcal S_{n}^+$. 
\end{proof}

\section{UMP tests in spiked covariance submodels} 
The absence of unbiased tests with non-trivial power 
under all alternatives $\Psi \in \mathcal S_n^+$ 
indicates that any useful tests of row dependence must 
focus on particular types of dependence. For example, 
if the rows of $\bl Y$ are measured at different times 
or locations, it makes sense to test for dependence using 
a spatial or temporal autoregressive submodel 
for $\Psi$. This can be done, for example, with a likelihood 
ratio test based on the likelihoods 
(\ref{eqn:plik}), (\ref{eqn:ldu}) or (\ref{eqn:ldg})
restricted to a subset of $\Psi$-values. 
Simulation results (not presented here) suggest that such 
tests perform reasonably well.

Another popular submodel of 
$\mathcal S_n^+$ are 
the so-called ``spiked covariance'' 
or partial isotropy models \citep[section 8.4]{mardia_kent_bibby_1979},
where $\Psi$ takes the form 
$\Psi = \bl C \Omega \bl  C^T + \bl I$ with
$\bl C \in \mathcal V_{r,n}\subset \mathbb R^{n\times p}$ and 
$\Omega\in \mathbb R^{r\times r}$ is a positive definite diagonal matrix. 
The eigenvalues of such a covariance matrix are 
$(\omega_1+1,\ldots, \omega_r+1,1, \ldots, 1) \in \mathbb R^n$, 
and the eigenvectors can be taken as $\bl E = [ \bl C \,  \bl C^\bot ]$, 
where $\bl C^{\bot}\in \mathcal V_{n-r,n}$ satisfies 
 $\bl C^T \bl C^{\bot} = \bl 0$. 
As described in the previous section, any level-$\alpha$ 
test that has power greater 
than $\alpha$  on a non-empty set of 
$\bl E$-values (and hence a non-empty set of $\bl C$ values)
must be biased.
Therefore, any submodel for which we have a useful test 
must restrict the eigenvectors of $\Psi$ in some way.

Perhaps the simplest case  of such a restricted submodel 
is a rank-1 spiked covariance model
of the form $\Psi =  \omega \bl c \bl c^T + \bl I $, where 
$\omega \in \mathbb R^+$ is unknown and $\bl c$ is a known 
unit vector in $\mathbb R^n$.  
In this case, a best invariant test of $H:\omega=0$ versus 
$K: \omega>0$ can be identified and described. 
As in the last section, we begin with the mean-zero model 
and then extend the result to the more general case. 
\citet{chikuse_2003a} shows that 
the density of $\bl U = \bl Y (\bl Y^T\bl Y)^{-1/2}$
for $\bl Y \sim N_{n\times p}(\bl 0,\bl I\otimes \Psi)$ 
is in general given by 
$p(\bl U | \Psi) = 
 |\Psi|^{-p/2} | \bl U^T \Psi^{-1} \bl U|^{-n/2}$. 
For $\Psi =  \omega \bl c \bl c^T + \bl I$, this reduces to 
\[
 p(\bl U |\omega, \bl c ) = (1+\omega)^{-p/2} (1-\bl c^T \bl U \bl U^T\bl c \omega/
 (1+\omega))^{-n/2} .
\]
It is easily checked that this class of densities has a monotone likelihood 
ratio in the statistic $t_{\bl c}(\bl U) = \bl c^T \bl U \bl U^T\bl c$, 
and so a uniformly most powerful test of 
$H: \omega=0$ versus $K:\omega>0$ is given by rejecting 
$H$ when $t_{\bl c}(\bl U)$ is large. 
Since such a test is UMP among tests based on $\bl U$ and is a 
function of the maximal invariant statistic $\bl U\bl U^T$, 
it is also  the uniformly most powerful invariant test for its 
level. 
Furthermore, the distribution of this test statistic can 
be obtained under both the null and alternative hypotheses. 
Using the result from  (\ref{eqn:dmi}), the test statistic can 
be written as 
\[
\bl c^T \bl U\bl U^T \bl c \stackrel{d}{=} 
\bl c^T \bl E  \Lambda^{1/2} \left (  \bl W ( \bl W^T \Lambda \bl W)^{-1} \bl W^T  \right ) \Lambda^{1/2}  \bl E^T \bl c, 
\]
where $\bl W$ is uniform on $\mathcal V_{p,n}$, and 
$\bl E$ and $\Lambda$ are the eigenvector and 
eigenvalue matrices of $\Psi$. For the rank-1 spiked model, 
we have $\bl E^T \bl c = (1,0,\ldots,0)^T \equiv \bl e_1^T$
and so 
\[ 
\bl c^T \bl U\bl U^T \bl c \stackrel{d}{=}  
  (\omega+1) \left(  \bl  W ( \bl W^T \Lambda \bl W)^{-1} \bl W^T  \right )_{[1,1]}. 
\]
In this case where  $\Lambda = \bl I + \omega \bl e_1 \bl e_1^T$, 
we have 
\begin{align*}
  \bl W^T \Lambda \bl W  &=  \bl I + \omega 
 \bl W^T\bl e_1 \bl e_1^T \bl W \\ 
 ( \bl W^T \Lambda \bl W)^{-1}&=   
 \bl I  - \bl w_1 \bl w_1^T  \frac{\omega}{1+ \omega |\bl w_1|^2 } ,
\end{align*}
where $\bl w_1\in \mathbb R^{p}$ is the first row of $\bl W$. 
We then have 
\begin{align*}
 \bl  W ( \bl W^T \Lambda \bl W)^{-1} \bl W^T &= 
  \bl W\bl W^T  - \bl W \bl w_1 \bl ( \bl W \bl w_1)^T \frac{\omega}{1+ \omega |\bl w_1|^2 }   \\
\left(  \bl  W ( \bl W^T \Lambda \bl W)^{-1} \bl W^T \right)_{[1,1]} &= 
   |\bl w_1|^2 - |\bl w_1|^4 \frac{\omega}{1+ \omega |\bl w_1|^2 }. 
\end{align*}
Letting $b =|\bl w_1|^2$, rearranging gives 
\[
 t_{\bl c}(\bl U) = \bl c^T \bl U\bl U^T \bl c 
 \stackrel{d}{=} \frac{1+\omega}{1+ b \omega } b.  
\]
Note that 
the right-hand side 
is an increasing function in $\omega$ for each fixed $b$, 
and so  the distributions of 
$t_{\bl c}(\bl U)$ are stochastically increasing in $\omega$. 
Additionally, the distribution of $b$ is known 
to be a beta$(p/2,(n-p)/2)$ distribution. This follows from 
the fact that the squared elements of
a row of a random 
matrix uniformly distributed on $\mathcal O_n$ are 
jointly distributed Dirichlet$(1/2,\ldots, 
1/2)$.
We summarize these results with the following theorem:
\begin{theorem}
The uniformly most powerful invariant level-$\alpha$ test of 
$H:\omega=0$ versus $K:\omega>0$ in the rank-1 spiked covariance 
model is given by 
\[ \phi(\bl Y) = 1 ( \bl c^T \bl U \bl U^T \bl c  > b_{1-\alpha}),  \]
where 
$b_{1-\alpha}$ is the $1-\alpha$ quantile of a $\text{beta}(p/2,(n-p)/2)$ distribution. 
The power of this test is given by 
\begin{align*}
\Pr\left ( \frac{1+\omega}{1+b\omega} b > b_{1-\alpha} \right )  &= 
\Pr\left ( b > \frac{b_{1-\alpha}}{ 1+ \omega (1-b_{1-\alpha} ) } \right ) ,  
\end{align*} 
where $b \sim $ beta$(p/2,(n-p)/2))$. 
\label{thm:umpi}
\end{theorem}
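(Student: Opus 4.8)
The plan is to assemble the ingredients developed in the paragraphs immediately preceding the statement into its three assertions: the form of the test, the value of the critical constant $b_{1-\alpha}$, and the power formula. For the first, I would argue that the displayed family $p(\bl U\mid\omega,\bl c)=(1+\omega)^{-p/2}\bigl(1-t_{\bl c}(\bl U)\,\omega/(1+\omega)\bigr)^{-n/2}$ has monotone likelihood ratio in $t_{\bl c}(\bl U)=\bl c^T\bl U\bl U^T\bl c$: for $\omega_1>\omega_0$ the density ratio equals $\bigl((1-a_0 t)/(1-a_1 t)\bigr)^{n/2}$ up to a constant, where $a_i=\omega_i/(1+\omega_i)$ is increasing in $\omega_i$ and the displayed quotient has derivative $(a_1-a_0)/(1-a_1 t)^2>0$ in $t=t_{\bl c}(\bl U)\in[0,1]$. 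By the Karlin--Rubin theorem the test $1(t_{\bl c}(\bl U)>k)$ is then UMP at its level among all tests that are functions of $\bl U$; since (by the mean-zero reduction of Section 2) every invariant test is a function of $\bl U\bl U^T$ and hence of $\bl U$, while this particular test is itself a function of $\bl U\bl U^T$ and therefore invariant, it is best in the larger class and a fortiori UMP among invariant tests. The regression case follows by replacing $\bl Y$ with $\bl H\bl Y$ as in Section 2.

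To pin down $k=b_{1-\alpha}$ I would compute the null distribution of $t_{\bl c}(\bl U)$. Under $H$, $\omega=0$ and $\Psi=\bl I$, so (\ref{eqn:dmi}) with $\Lambda=\bl I$ gives $\bl U\bl U^T\stackrel{d}{=}\bl W\bl W^T$ for $\bl W$ uniform on $\mathcal V_{p,n}$, whence $t_{\bl c}(\bl U)\stackrel{d}{=}\bl c^T\bl W\bl W^T\bl c$. By the orthogonal invariance of the uniform law on $\mathcal V_{p,n}$ I may rotate so that $\bl c$ is the first coordinate vector, giving $t_{\bl c}(\bl U)\stackrel{d}{=}|\bl w_1|^2=b$, the squared norm of the first row of $\bl W$. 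As recalled before the statement, $b\sim\text{beta}(p/2,(n-p)/2)$, because the squared entries of a row of a Haar-random element of $\mathcal O_n$ are jointly $\text{Dirichlet}(1/2,\ldots,1/2)$ and $b$ is a sum of $p$ of them. Taking $k$ to be the $(1-\alpha)$-quantile $b_{1-\alpha}$ of this beta therefore makes the null rejection probability exactly $\alpha$, with no boundary randomization needed since the distribution is continuous.

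For the power I would invoke the identity $t_{\bl c}(\bl U)\stackrel{d}{=}\tfrac{1+\omega}{1+b\omega}\,b$ with $b\sim\text{beta}(p/2,(n-p)/2)$, derived above from (\ref{eqn:dmi}) together with the Sherman--Morrison form of $(\bl W^T\Lambda\bl W)^{-1}$; the power is then $\Pr\bigl(\tfrac{1+\omega}{1+b\omega}b>b_{1-\alpha}\bigr)$. Since $b_{1-\alpha}\in(0,1)$ and $\omega>0$ give $1+\omega(1-b_{1-\alpha})>0$, the event $\{\tfrac{1+\omega}{1+b\omega}b>b_{1-\alpha}\}$ coincides with $\{b(1+\omega(1-b_{1-\alpha}))>b_{1-\alpha}\}$, i.e.\ $\{b>b_{1-\alpha}/(1+\omega(1-b_{1-\alpha}))\}$, which is the second displayed form. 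The same identity incidentally confirms that the test is exact rather than conservative: $\omega\mapsto\tfrac{1+\omega}{1+b\omega}b$ is increasing for each fixed $b$, so the laws of $t_{\bl c}(\bl U)$ are stochastically increasing in $\omega$ and the size is attained at $\omega=0$. I do not anticipate a genuine obstacle: all the substantive content lies in the distributional reductions already carried out, and the remaining work is to make the Karlin--Rubin optimality step explicit and to note the collapse of $\bl c^T\bl W\bl W^T\bl c$ to a single beta variate.
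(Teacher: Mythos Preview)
Your proposal is correct and follows essentially the same route as the paper: the monotone likelihood ratio in $t_{\bl c}(\bl U)$, the Karlin--Rubin step combined with the fact that the test depends on the maximal invariant $\bl U\bl U^T$, the reduction via (\ref{eqn:dmi}) to $|\bl w_1|^2$ with its beta$(p/2,(n-p)/2)$ law, and the algebraic inversion of $\tfrac{1+\omega}{1+b\omega}b>b_{1-\alpha}$ are exactly the ingredients the paper uses. You are a bit more explicit than the paper in verifying the MLR by differentiating the ratio and in justifying the null reduction to $b$ via orthogonal invariance rather than specializing the alternative formula at $\omega=0$, but these are expository rather than substantive differences.
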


\begin{figure}
\includegraphics[width=6in]{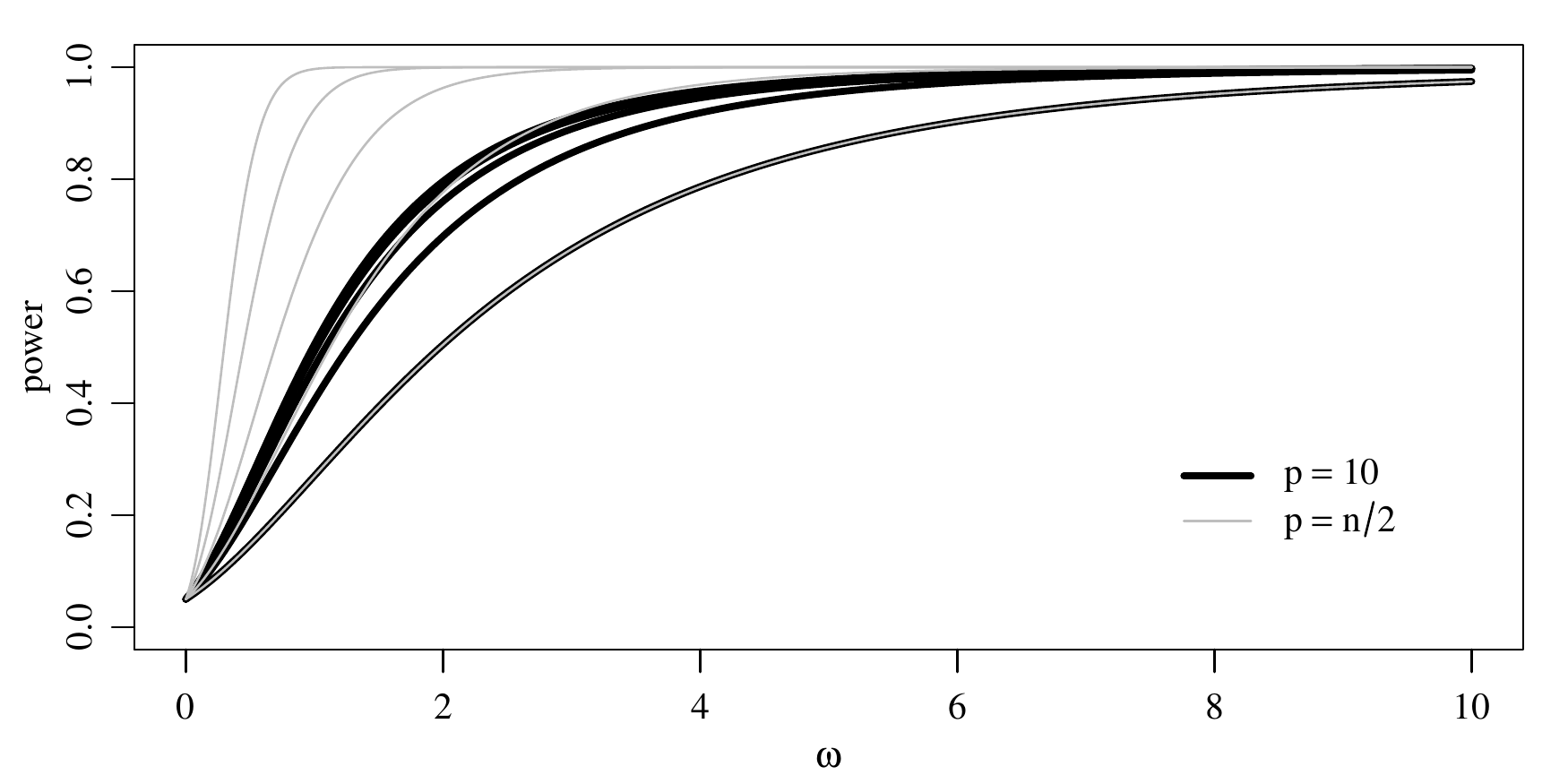} 
\caption{Power
of the level-0.05 UMPI test 
 as a function of $\omega$ for various $p$ and $n\in 
 \{ 20,40,80,160,320 \}$. }
\label{fig:sc1pow}
\end{figure}

The power of the level-0.05 test for various values of $p$ and $n$ 
are shown in Figure \ref{fig:sc1pow}. 
Note that the power does not go to one with increasing 
$n$ if $\omega$ and $p$ are fixed. This makes intuitive sense -
in this case the information per row is 
not increasing while the dimension of $\bl c$ is. 
However, it should be noted that the power 
for fixed 
$n$ and $\omega$ is non-monotonic in $p$: 
Some numerical calculations (not presented here) indicate
that the optimal power for 
moderate or large values of $n$ or $\omega$ is when $p\approx n/2$, 
and is somewhat less than this if  $n$ and $\omega$ are both small.

It is interesting to note that for this submodel, the likelihood 
ratio test is quite bad. Straightforward calculations 
show that the MLE of $\omega$ is  
$\hat \omega = \frac{ n t_{\bl c}  -p }{ p(1-t_{\bl c}) }$. 
Plugging this into the likelihood indicates that 
 a likelihood ratio test is one that rejects 
when $(n-p) \log (1-t_{\bl c}) +  p \log t_{\bl c}$ is large. 
This quantity is not monotonic in the UMPI test statistic 
$t_{\bl c}$, and performs poorly as a result. 

Finally, it is straightforward to extend Theorem \ref{thm:umpi} 
to the matrix normal regression model: Consider testing 
$H: \omega= 0$ versus $K: \omega>0$ based on 
$\bl Y \sim N_{n\times p}(\bl X \bl  B^T , \Sigma \otimes (  
  \omega \bl c \bl c^T + \bl I)) $. As shown in Section 2, 
any invariant test must depend on $\bl Y$ only through 
$\bl G(\tilde{\bl Y}) =  \tilde {\bl Y}( \tilde {\bl Y}^T \tilde {\bl Y})^{-1} \tilde {\bl Y}^T$, 
where $\tilde{\bl Y} = \bl H \bl Y$. 
Under the spiked model, $\tilde{\bl Y} \sim 
  N_{(n-q)\times p }( \bl 0 , \Sigma\otimes ( \bl I + \tilde \omega \tilde{\bl c}\tilde{\bl c}^T))$, where $\tilde{\bl c} = \bl H \bl c/||\bl H \bl c||$
and $\tilde \omega = \omega ||\bl H\bl c||^2 $. 
By Theorem \ref{thm:umpi}, 
the most powerful test of $H$ versus $K$ based on  
 $\bl G(\tilde{\bl Y})$, and hence the most powerful invariant test, 
is obtained by rejecting when 
$\tilde {\bl c}^T \bl G(\tilde{\bl Y})
 \tilde {\bl c}$ is large. This quantity  
can be expressed in more familiar forms as follows:
\begin{align*}
\tilde {\bl c}^T \bl G(\tilde{\bl Y})  \tilde {\bl c} &= 
 \bl c^T \bl H^T \left [ \tilde {\bl Y}( \tilde {\bl Y}^T \tilde {\bl Y})^{-1} \tilde {\bl Y}^T       \right ]  \bl H \bl c  \\ 
&=  \bl c^T \bl H^T \bl H \bl Y ( {\bl Y}^T  \bl H^T \bl H {\bl Y})^{-1} {\bl Y}^T     \bl H^T   \bl H \bl c  \\  
&= \bl c^T \bl R  (\bl R^T \bl R)^{-1} \bl R^T \bl c   = 
  \bl c^T \bl  G( \bl R)  \bl c . 
\end{align*}
Furthermore, this can also be expressed as  
$ \bl c^T  ( \bl Y - \bl X \hat{\bl B}^T) \hat \Sigma^{-1} 
            ( \bl Y - \bl X \hat{\bl B}^T)^T  \bl c /n$, 
where $\hat {\bl B }^T =  (\bl X^T\bl X)^{-1} \bl X^T \bl Y$ is 
the OLS estimate of $\bl B^T$, and 
$\hat \Sigma= \bl R^T \bl R/n$ is the MLE of $\Sigma$ under $H$. 
By Theorem \ref{thm:umpi}, this test statistic has a beta$( p/2, (n-q-p)/2)$
distribution under the null hypothesis. 

\section{A test of positive row dependence}
The UMPI test developed in the previous section 
is of limited applicability, as typically 
the space of alternatives of interest 
is larger 
than that provided 
by a spiked covariance model with a fixed eigenvector $\bl c$. 
However, the UMPI test suggests the possibility of 
constructing tests based
a set of statistics $t_{\mathcal C} = \{ t_{\bl c} = \bl c^T \bl G(\bl R) \bl c :  
\bl c \in \mathcal C\}$, 
where $\mathcal C \subset \mathbb R^{n}$ is a set of vectors 
of particular interest. 
For example, suppose there is concern that some rows 
of $\bl Y$ are positively correlated with each other. 
Based on the results of the previous section, 
the test statistic 
$t_{ii'} = \bl c_{ii'}^T \bl G(\bl R)  \bl c_{ii'}$ 
could be used
to detect positive correlation between rows $i$ and $i'$, 
where $\bl c_{ii'} = (\bl e_i + \bl e_{i'})/\sqrt{2}$ is the vector 
with entries of $1/\sqrt{2}$  in positions $i$ and $i'$ and 
entries of zero elsewhere.
However, if there is no information as to which rows might be 
correlated, some summary of the set of 
pairwise test statistics
$\{ t_{\bl c_{ii'}} = \bl c_{ii'}^T \bl G(\bl R)  \bl c_{ii'} :  
  i\neq i' \}$ could be used as a test statistic. 
Given a 
 residual matrix $\bl R$, 
the values of these test statistics can be computed quite 
easily: Some straightforward matrix calculations show that the 
value of $t_{\bl c_{ii'}}$ for $i\neq i'$ 
is given by element $(i,i')$ of the 
matrix $\bl T$, where 
\begin{equation}  
\bl T = \bl G(\bl R)  +  (\bl g \bl 1^T + \bl 1 \bl g^T) /2, 
\label{eqn:pms} 
\end{equation}
and $\bl g$ is the diagonal of $\bl G(\bl R)$. 

A test for positive dependence among pairs of rows of $\bl Y$ can be based
on a scalar summary function 
of the non-diagonal entries of $\bl T$. 
Letting $\tilde{t}$ be one such function, 
the null distribution of $\tilde{t}$ may be obtained via simulation, 
as the distribution of 
$\bl G(\bl R)$ 
does not depend on any unknown parameters under the null model. 
A Monte Carlo approximation to 
the null distribution of $\tilde t$ may be obtained 
via simulation of independent 
$n\times p$ random matrices $\bl Y^{(1)},\ldots, \bl Y^{(S)}$ 
with standard normal entries. 
For each simulated matrix  $\bl Y^{(s)}$
a residual matrix $\bl R^{(s)}$ is obtained as determined by the mean model. 
From $\bl R^{(s)}$, values of  
 $\bl G(\bl R^{(s)})$ , $\bl T^{(s)}$
 and  $\tilde t^{(s)}$ may be computed. 
The critical value for a level-$\alpha$ test based on the 
test statistic $\tilde t$ is approximated by the 1-$\alpha$ 
quantile of $\tilde t^{(1)},\ldots, \tilde t^{(S)}$.

The choice  of the summary function $\tilde t$ may depend on 
application-specific concerns about a particular type of 
dependence. Concern about dependence between  
small number of unspecified rows would suggest 
using 
the maximum of the off-diagonal elements of 
the matrix $\bl T$ in (\ref{eqn:pms}) as a test statistic. 
We refer to this statistic as $t_{\rm max}$, and the resulting 
test as the maxEP test (maximum exchangeable pair test). 
Figure \ref{fig:pmaxpow} shows the power of the 
level-0.05 maxEP test 
under 
the 
mean-zero model and 
alternative $\Psi = \bl I + \omega \bl c_{ii'} \bl c_{ii'}^T$
for a variety of sample sizes and  values of $\omega$
(the choice of $i$ or $i'$ does not affect the power). 
Note that if it were known in advance which pair of 
rows  $(i,i')$ were possibly 
correlated, the UMPI 
test statistic 
$t_{\bl c_{ii'}}$  
could be used, giving the power shown in 
Figure \ref{fig:sc1pow}. The difference between 
Figure \ref{fig:sc1pow} and Figure \ref{fig:pmaxpow} 
indicates the power loss that results
from considering the larger class of alternatives.

\begin{figure}
\includegraphics[width=6in]{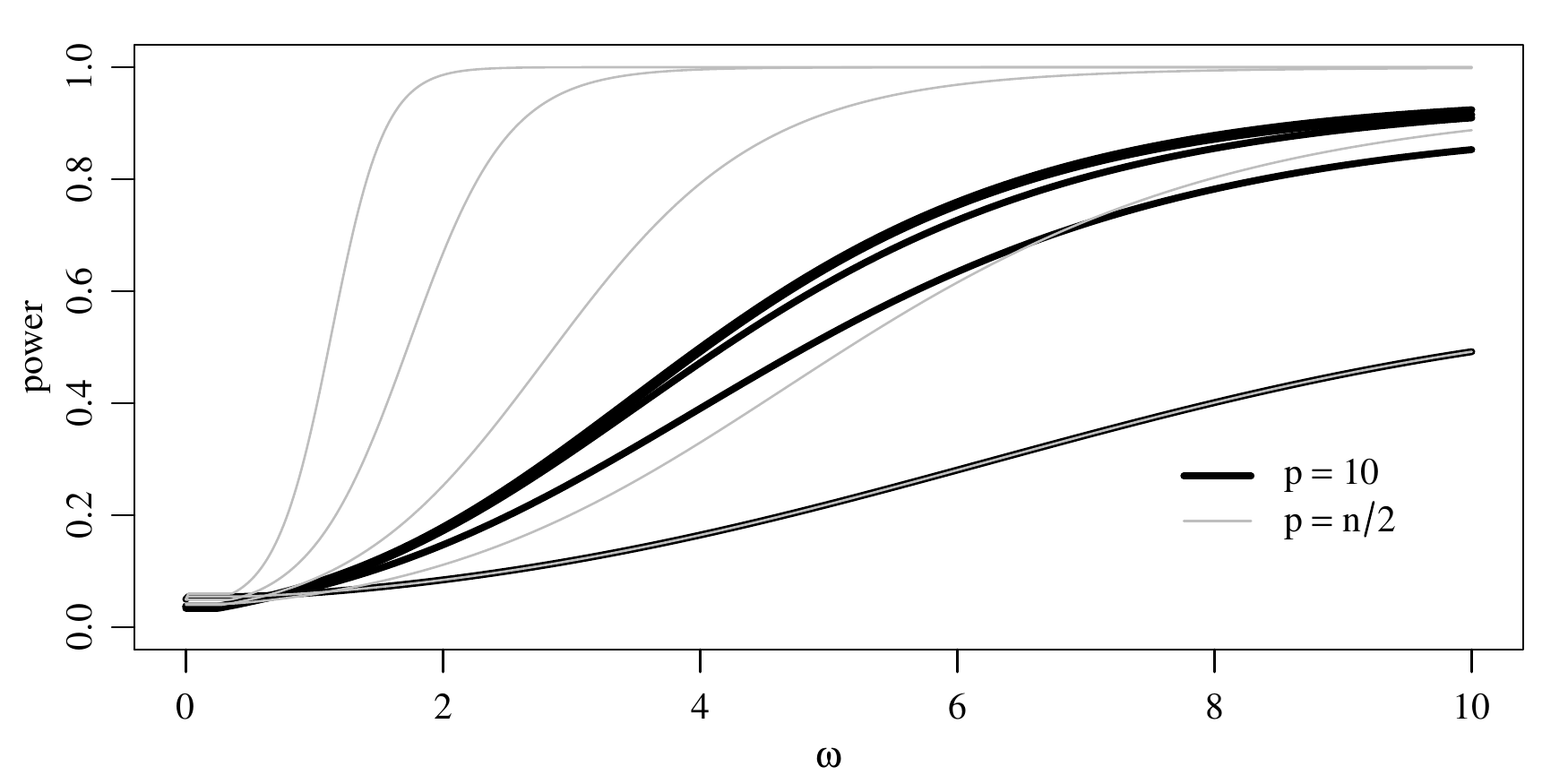}
\caption{Power
of the level-0.05 maxEP test
 as a function of $\omega$ for various $p$ and $n\in 
 \{ 20,40,80,160,320 \}$. }
\label{fig:pmaxpow}
\end{figure}

To illustrate its use,   the maxEP
test was applied to three datasets 
using  a few different mean models. 
The first dataset is described in  \citet{ashley_etal_2006} and
has been analyzed by \citet{efron_2009}, among others. 
The second two datasets are described more fully in \citet{flury_1997}.
For each test on each dataset, 
the null distribution was approximated by a Monte Carlo 
sample of size 5,000. The computer code for implementing 
these tests is available at my website, 
\url{http://www.stat.washington.edu/~pdhoff/}.

\paragraph{Cardio:} 
This  dataset consists of 20,426 gene expression levels 
measured on $n=63$ subjects. 
Although 20,426 gene expression variables are available, any invariant test must be a function of
less than 63 of these. Based on the discussion  of power 
that followed Theorem \ref{thm:umpi}, 
only the first $p=32\approx n/2$ variables were used to perform the test. 
As in  \citet{efron_2009}, inference is based on 
a doubly-centered residual matrix 
$\bl R$ obtained by de-meaning the rows and columns of the data matrix $\bl Y$, 
so that $\bl R=(\bl I_n-\bl 1_n\bl 1_n^T/n ) \bl Y (\bl I_p- \bl 1_p\bl 1_p^T/p )$.
The observed value of  $t_{\rm max}$ based on 
$\bl R$ 
is .927.  
In contrast, the largest value of $t_{\rm max}$ observed in 
the Monte Carlo sample
was 0.856, giving an approximate Monte Carlo $p$-value of zero
and indicating
strong evidence against the null model.

\paragraph{Turtles:} 
These data consist of length, width and height measurements of
24 male and 24 female turtles, sampled from a single pond on a
single day. 
Two tests were applied  to the 
log-transformed data, the first of which 
tested $H:\Psi=\bl I$ versus 
$K:\Psi\neq \bl I$ 
in the column means model, 
$\bl Y \sim N_{n\times p}( \bl 1 \bs \mu^T , \Sigma\otimes \Psi)$, 
so that under the null model the rows of $\bl Y$ are i.i.d.\ 
$p$-variate normal random vectors. 
The residual matrix for this mean model is 
$\bl R = (\bl I_n - \bl 1_n \bl 1_n^T/n) \bl Y$, 
which gives an
observed $t_{\rm max}$ statistic of 0.344 and a 
$p$-value of 0.16. 
The second test is 
based on the matrix normal regression model 
(\ref{eqn:mnm}) where $\bl X$ is the $n\times 2$ matrix indicating 
the sex of each turtle. The residual matrix here is 
$\bl R = (\bl I  - \bl X (\bl X^T \bl X)^{-1} \bl X^T) \bl Y$, 
which gives an observed test statistic of 
$t_{\rm max}= 0.336$, corresponding to a $p$-value of 
0.20.

\paragraph{Wines:} 
These data consist of measurements of $p=15$ organic compounds on $n=26$ Riesling wines.
Tests were applied to the log-transformed data.
The wines were selected from different vintners from three countries,
and do not constitute a random sample. 
Evidence of row covariance was evaluated 
in the context of 
the same 
mean models as for the
turtle data - a column means model and a model 
taking into account a known categorical variable. 
For the column means model, 
the $t_{\rm max}$ statistic and 
the $p$-value for the maxEP test
were 0.893 and 
0.007 respectively, indicating strong evidence against 
the null model of i.i.d.\ measurements. 
However, 
after accounting for country differences 
via the matrix normal regression model 
(with $\bl X$  being the  $n\times 3$ matrix indicating country of origin), 
the test statistic and $p$-value
were 0.843 and 0.23 respectively, 
indicating little evidence against 
$H:\Psi = \bl I$ after accounting for mean differences due to country.

\section{Discussion} 
The results of this article were developed in the 
context of a matrix normal error variance model, 
but they hold more generally for models with 
stochastic representations of the form 
$\bl Y = \bl X \bl B^T +  \Psi^{1/2} \bl Z \Sigma^{1/2} $. 
For example, 
the characterization of the maximal invariant statistics 
in Section 2 relies only on the invariance of the model 
and that $\bl Z$ is  full rank with probability one. 
The results of Sections 3 and 4 depend only on the distribution 
of the maximal invariant statistic, 
which in turn depends on $\bl Z$ only through 
$\bl W = \bl Z (\bl Z^T \bl Z)^{-1/2}$. 
For a normal error variance model the distribution of 
$\bl W$ is uniform on the Stiefel manifold, 
but this is also true for 
any  
model where the distribution of the  vectorization of $\bl Z$ 
is spherically symmetric. 
The class of models for $\bl Y$ 
in which $\text{vec}(\bl Z)$ is spherically symmetric are the 
elliptically contoured matrix distributions 
\citep{gupta_varga_1994}, which includes
heavy-tailed and contaminated distributions, among others. 

This article has considered tests of $H: \Psi=\bl I$ versus $K:\Psi\neq \bl I$, 
that is, tests of whether or not the 
rows of the error matrix $\bl Y - \bl X \bl B^T$
are independent \emph{and} identically distributed. 
This null hypothesis is violated not just when the rows are dependent, 
but also when they are 
heteroscedastic and 
independent. 
However, in some applications it may be useful to have 
a test that includes independent heteroscedasticity as part of the 
null hypothesis.  
\citet{volfovsky_hoff_2015} studied a likelihood 
ratio test of $H: (\Sigma ,\Psi) \in \mathcal D_{p}^+ \times 
\mathcal D_n^+$
versus $K: (\Sigma ,\Psi) \notin \mathcal D_{p}^+ \times 
\mathcal D_n^+$,  where
$\mathcal D_{k}^+$ is the set of $k\times k$ diagonal matrices 
with positive entries.   
However, their test is only applicable to square data matrices, 
and will reject in the presence of either row or column dependence. 
For testing $H: \Psi \in \mathcal D_n^+$
versus
$K: \Psi \not\in \mathcal D_n^+$ 
it might be possible to use invariance, 
 but perhaps not directly:
A natural group with which to find 
an invariant procedure are the transformations of 
the form $g(\bl Y) = \bl A \bl Y \bl B^T $, 
where $\bl A\in \mathcal D_n^+$ and $\bl B\in \mathbb R^{p\times p}$ 
is nonsingular. 
However, while the covariance model is invariant to such transformations
the mean model is not, and so it seems that 
to usefully apply invariance one would first need to reduce 
to a mean-zero model, as was 
done  in Section 2.3 for mean models with row effects.


\bibliography{mnt_report} 

\end{document}